\numberwithin{equation}{section}
\begin{document}

\title[Cyclotomic expansions for double twist knots]
{Cyclotomic expansions for double twist knots with an odd number of half-twists}
\author[Qingtao Chen, Kefeng Liu and Shengmao Zhu]{Qingtao Chen, Kefeng Liu and 
Shengmao Zhu}

\address{Division of Science \\
New York University Abu Dhabi \\
Abu Dhabi \\
United Arab Emirates} \email{chenqtao@nyu.edu, chenqtao@hotmail.com}
\address{Mathematical Science Research Center \\
Chongqing University of Technology  \\
Chongqing, P. R. China. }
\address{Department of mathematics \\
University of California at Los Angeles, Box 951555\\
Los Angeles, CA, 90095-1555.} \email{liu@math.ucla.edu}

\address{Department of Mathematics \\
Zhejiang Normal University  \\
Jinhua Zhejiang,  321004, China }
\email{szhu@zju.edu.cn}

\begin{abstract}
In this note, we compute the cyclotomic expansion formula for colored Jones polynomial of double twist knots with an odd number of half-twists $\mathcal{K}_{p,\frac{s}{2}}$ by using the Kauffman bracket skein theory. It answers a question proposed by Lovejoy and Osburn in 2019.   
\end{abstract}

\maketitle

\theoremstyle{plain} \newtheorem{thm}{Theorem}[section] \newtheorem{theorem}[%
thm]{Theorem} \newtheorem{lemma}[thm]{Lemma} \newtheorem{corollary}[thm]{%
Corollary} \newtheorem{proposition}[thm]{Proposition} \newtheorem{conjecture}%
[thm]{Conjecture} \theoremstyle{definition}
\newtheorem{remark}[thm]{Remark}
\newtheorem{remarks}[thm]{Remarks} \newtheorem{definition}[thm]{Definition}
\newtheorem{example}[thm]{Example}

%mathfrak

%\newcommand{\bS}{{\mathbb S}}
%%%%%%%%%%%%%%%%%%%%%%%%%%%%%%%%%%%%%%%%%%%%%%%%%%%%%%%%%%%%%%

%%%%%%%%%%%%%%%%%%%%%%%%%%%%%%%%%%%%%%%%%%%%%%%%%%%%%%%%%%%%%%%%%%%%%%%%%%%%%%%%%%%%%%%%%%%%%%%%
%Abstract                                            %
%%%%%%%%%%%%%%%%%%%%%%%%%%%%%%%%%%%%%%%%%%%%%%%%%%%%%%%%%%%%%%%%%%%%%%%%%%%%%%%%%%%%%%%%%%%%%%%%

%\setcounter{tocdepth}{5} \setcounter{page}{1}

\tableofcontents
\newpage

\section{Introduction}
In this paper, we consider the double twist knot $\mathcal{K}_{p,\frac{s}{2}}$ with an odd number of half-twists, where $p$ is a nonzero integer  and $s$ is odd. For convenience, we usually write $s=2m-1$ with $m$ a positive integer.

\begin{figure}[!htb] 
\begin{align}
\raisebox{-15pt}{
\includegraphics[width=120 pt]{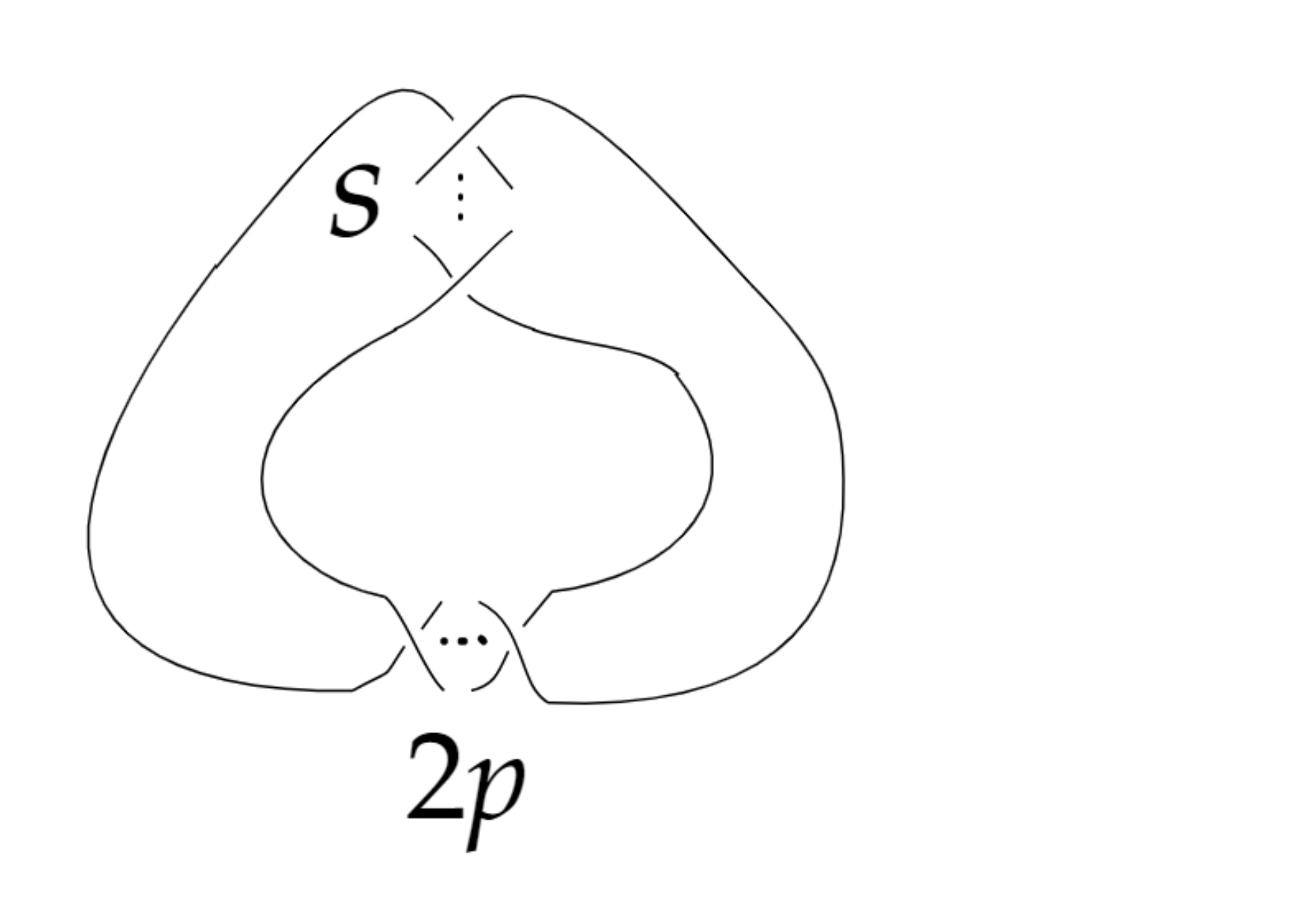}}
\end{align}
\caption{Double twist knot $\mathcal{K}_{p,\frac{s}{2}}$}
\end{figure}

 The goal of this note is to compute the cyclotomic expansion formula for the colored Jones polynomial of $\mathcal{K}_{p,\frac{s}{2}}$. Hence it answers a question proposed in \cite{LO19}. 

First, let us fix the notations used in this paper. Let $A$ be an indeterminate and $\mathfrak{q}=A^2$. For an integer $n$, we define the symbols by
\begin{align*}
[n] &= \frac{\mathfrak{q}^n-\mathfrak{q}^{-n}}{\mathfrak{q}-\mathfrak{q}^{-1}}, \qquad \{ n \} =  \mathfrak{q}^{n} -
\mathfrak{q}^{-n}.
\end{align*}
 Furthermore, for $n\geq 0$ and $0\leq i\leq n$,  we define the $\mathfrak{q}$-factorials by
\begin{align*}
[n]! =[n][n-1]\cdots [1], \quad \{ n \} ! =  \{ n \}\{n-1\}\cdots \{1\}, \quad \left[
\begin{array}{@{\,}c@{\,}}n \\ i \end{array} \right] =
\frac{[n]!}{[i]! [n-i]!}.
\end{align*}
We introduce the variable $q=\mathfrak{q}^2$ and the $q$-Pochhammer symbol as follows
\begin{align}
    (x;q)_{k}=\prod_{j=0}^k(1-xq^j). 
\end{align}
Then we define 
\begin{align}
    \left[
\begin{array}{@{\,}c@{\,}}n \\ i \end{array} \right]_q =
\frac{(q;q)_n}{(q;q)_i(q;q)_{n-i}}.
\end{align}

Given a knot $\mathcal{K}$, we denote by $J'_{N}(\mathcal{K};\mathfrak{q})$ the $N$-th normalized colored Jones polynomial of $\mathcal{K}$ as in \cite{Mas03}.  
Habiro proved the famous cyclotomic expansion formula for $J'_{N}(\mathcal{K};\mathfrak{q})$ (see formula (6.5) in \cite{Hab08}) which states that
there are Laurent polynomials (independent of $N$)
$H_k(\mathcal{K};\mathfrak{q})\in \mathbb{Z}[\mathfrak{q}^{\pm 1}]$,  such that
\begin{align}
J_{N}'(\mathcal{K};\mathfrak{q})=\sum_{k=0}^{N-1}H_k(\mathcal{K};\mathfrak{q})\frac{\{N+k\}!}{\{N-1-k\}!\{N\}}.
\end{align}
Habiro proved the existence of such  polynomial $H_k(\mathcal{K};\mathfrak{q})$ via quantum group theory \cite{Hab08}. 
However, it is not easy to compute the explicit formula of $H_k(\mathcal{K};\mathfrak{q})$ for a given knot $\mathcal{K}$. For the twist knot $\mathcal{K}_p$ ( i.e.  $\mathcal{K}_{p,1}$ in our notation), the explicit formula for $H_k(\mathcal{K}_p;\mathfrak{q})$ was first computed by Habiro in \cite{Hab00} (see also \cite{Hab08}), and later given by Masbaum in \cite{Mas03} by using the Kauffman bracket skein theory. 

As to the double twist knot 
$\mathcal{K}_{p,r}$ (where $p,r$ are two nonzero integers), one can derive the following expression for $H_{k}(\mathcal{K}_{p,r};\mathfrak{q})$ with a slight modification of Masbaum's computations in \cite{Mas03} for the twist knot $\mathcal{K}_p$( cf. \cite{Lau10,Wal14,LO17} etc.)
\begin{align}
    H_{k}(\mathcal{K}_{p,r};\mathfrak{q})=(-1)^kc'_{k,p}c'_{k,r},
\end{align}
where 
\begin{align}
    c'_{k,p}=\{k\}!\sum_{l=0}^k\frac{(-1)^l\mathfrak{q}^{2pl(l+1)}\{2l+1\}}{\{k+l+1\}!\{k-l\}!}.
\end{align}
Moreover, Masbaum also provided another multiple sum expression for the formula $c'_{k,p}$ (see formula (46) in \cite{Mas03}) which implies that 
$
  H_{k}(\mathcal{K}_{p,r};\mathfrak{q})\in \mathbb{Z}[\mathfrak{q}^{\pm 1}]. 
$

Now we consider the double twist knots with an odd number of half-twists $\mathcal{K}_{p,\frac{s}{2}}$.
 In this note, we compute an explicit formula for $H_k(\mathcal{K}_{p,\frac{s}{2}};\mathfrak{q})$ by using the Kauffman bracket skein theory.  The main result is
\begin{theorem} \label{theorem-1}
The colored Jones polynomial of $\mathcal{K}_{p,\frac{s}{2}}$ is given by 
\begin{align} \label{formula-JN}
J'_{N}(\mathcal{K}_{p,\frac{s}{2}};\mathfrak{q})=\sum_{k=0}^{N-1}H_{k}(\mathcal{K}_{p,\frac{s}{2}};\mathfrak{q})\frac{\{N+k\}!}{\{N-1-k\}!\{N\}},
\end{align}
where
\begin{align} \label{formula-Hk0}
H_{k}(\mathcal{K}_{p,\frac{s}{2}};\mathfrak{q})=(-1)^k\sum_{j=0}^kd_{k,j,p}c'_{j,p}
\tilde{c}'_{j,\frac{s}{2}},
\end{align}
with
\begin{align}
d_{k,j,p}&=\sum_{i=j}^k(-1)^{i+j}\mathfrak{q}^{-2pi(i+2)}\frac{\{2i+2\}\{i+1+j\}!}{\{k+i+2\}!\{k-i\}!\{i-j\}!},\\\nonumber
c'_{j,p}&=\{j\}!\sum_{l=0}^j\frac{(-1)^l\mathfrak{q}^{2pl(l+1)}\{2l+1\}}{\{j+l+1\}!\{j-l\}!},\\\nonumber
\tilde{c}'_{j,\frac{s}{2}}&=\{j\}!\sum_{l=0}^j\frac{\mathfrak{q}^{sl(l+1)}\{2l+1\}}{\{j+l+1\}!\{j-l\}!}.
\end{align}
\end{theorem}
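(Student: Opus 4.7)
The plan is to adapt the Kauffman bracket skein-theoretic computation of Masbaum \cite{Mas03} from the single twist knot $\mathcal{K}_p$ to the case of $\mathcal{K}_{p,\frac{s}{2}}$, in which one of the two twist regions now contains an odd number $s=2m-1$ of half-twists rather than an integer number of full twists. The overall shape of the argument should mirror the derivation of the formula $H_k(\mathcal{K}_{p,r};\mathfrak{q})=(-1)^k c'_{k,p}c'_{k,r}$ in the all-full-twist case, with a new ingredient accounting for the parity of the half-twist count.

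Concretely, I would first write $J'_N(\mathcal{K}_{p,\frac{s}{2}};\mathfrak{q})$ as the Kauffman bracket evaluation of a diagram in which the Jones-Wenzl idempotent $f_{N-1}$ is cabled and inserted into the two-tangle associated to the link, isolating the contribution of the $p$ full-twist region on one side and the $s$ half-twist region on the other. Choosing the standard basis that diagonalizes the full-twist operator on the two-strand cabling, the $p$ full-twist box acts by eigenvalues proportional to $(-1)^l \mathfrak{q}^{2pl(l+1)}$; re-expressing the output in terms of the same basis and collecting the contributions of $f_{N-1}$ reproduces the factor $c'_{j,p}$ exactly as in Masbaum's argument, and simultaneously converts the $f_{N-1}$-evaluations into the ratio $\{N+k\}!/(\{N-1-k\}!\{N\})$.

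The novel part is the odd-half-twist side. A single half-twist $\omega$ is not diagonal in the full-twist eigenbasis, but $\omega^2$ is the full twist. I would split $\omega^{2m-1}=\omega^{2m}\cdot\omega^{-1}$: the even part $\omega^{2m}$ acts diagonally with eigenvalue $\mathfrak{q}^{sl(l+1)}$ (explaining the exponent in $\tilde{c}'_{j,\frac{s}{2}}$ and the disappearance of the $(-1)^l$ alternation present in $c'_{j,p}$, as one sign factor is absorbed into $\omega^{-1}$), while $\omega^{-1}$ is expanded in the basis via a skein identity for the inverse half-twist. That expansion introduces an additional summation index $i$ ranging from $j$ up to $k$ and must be pushed through the $p$ full-twist box on the other side, which is where the $\mathfrak{q}^{-2pi(i+2)}$ dependence inside $d_{k,j,p}$ enters. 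After careful bookkeeping of $\{\,\cdot\,\}$-factorials and $q$-binomial cancellations, the nested sum collapses to the triangular expression defining $d_{k,j,p}$, yielding the factorized form $\sum_j d_{k,j,p}\, c'_{j,p}\,\tilde{c}'_{j,\frac{s}{2}}$.

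The main obstacle is this half-twist/change-of-basis computation: one must pin down the correct skein identity expressing $\omega^{-1}$ in the full-twist eigenbasis, then simplify the resulting triple-indexed sum into the compact form of $d_{k,j,p}$. A related subtlety is verifying the precise sign and framing corrections — in particular, checking that the overall $(-1)^k$ in \eqref{formula-Hk0} comes out correctly from the combined framing of the $\omega^{-1}$ factor and the normalization of $J'_N$. Once these ingredients are in place, assembling the two sides and matching the coefficient of $\{N+k\}!/(\{N-1-k\}!\{N\})$ directly yields \eqref{formula-JN} with $H_k(\mathcal{K}_{p,\frac{s}{2}};\mathfrak{q})$ as stated.
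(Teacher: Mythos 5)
Your overall framework (cabling by $e_{N-1}$, expanding in the $R_k$-basis, extracting $c'_{j,p}$ from the integer-twist region and a modified coefficient from the odd-half-twist region via fusion) is the same as the paper's, and your identification of the two factors $c'_{j,p}$ and $\tilde{c}'_{j,\frac{s}{2}}$ is essentially right. However, there is a genuine gap in your account of where $d_{k,j,p}$ comes from, and the mechanism you propose would not produce it. Your premise that a single half-twist is not diagonal in the relevant basis is false: by formula (\ref{formula-twist}), a half-twist on two strands colored $n$ acts on the fusion channel $2l$ by the scalar $\delta(2l;n,n)$, so the whole block of $s$ half-twists acts diagonally by $\delta(2l;n,n)^s=(-1)^l\mu_{2l}^{s/2}/\mu_n^{s}$. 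This is exactly how the paper obtains $\tilde{c}_{j,\frac{s}{2}}$ (Lemmas \ref{Lemma-2n2n} and \ref{lemma-Rkj}): the exponent $\mathfrak{q}^{sl(l+1)}$ and the disappearance of the $(-1)^l$ alternation come from writing $\delta^{s}=\delta^{s-1}\cdot\delta$ for odd $s$, not from a splitting into $\omega^{2m}\cdot\omega^{-1}$ (note that $2m$ half-twists would give exponent $\mathfrak{q}^{2ml(l+1)}$, not $\mathfrak{q}^{sl(l+1)}$). Consequently no extra summation index is generated on the half-twist side, and your proposed expansion of $\omega^{-1}$ "pushed through the $p$-twist box" has nothing to act on.

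The factor $d_{k,j,p}$ in fact arises on the \emph{integer}-twist side, from a writhe/framing correction. Because the other twist region carries an odd number of half-twists, the two strands passing through the $\omega^p$-encircled clasp are parallel rather than antiparallel, so trading the $2p$ clasp crossings for an $\omega^p$-colored circle changes the self-framing of the cabled knot by $4p$. The paper compensates by applying $t^{-4p}$ inside the $R$-basis, $t^{-4p}(R_k)=\sum_{j}d^{(-4p)}_{k,j}R_j$ with $d^{(-4p)}_{k,j}=\sum_{i=j}^{k}t_{k,i}\,\mu_i^{-4p}\,s_{i,j}$ (Lemma \ref{lemma-djk}); this change of basis is the source of the index $i$ and of $\mathfrak{q}^{-2pi(i+2)}=\mu_i^{-4p}$, and it is precisely what preserves the cyclotomic form. (Pulling the correction out as a global factor instead yields the non-cyclotomic Walsh-type formula of the appendix, with prefactor $\mathfrak{q}^{-2p(N^2-1)}$ and no $d_{k,j,p}$.) Your argument would need to identify and distribute this framing correction over the $R_k$; without it, the bookkeeping you describe cannot collapse to the stated $d_{k,j,p}$, and the claimed derivation of (\ref{formula-Hk0}) does not go through.
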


Furthermore, by using the technique of Bailley chains \cite{And84}, we can derive the following multiple sum expressions for the above formulas.
\begin{theorem} \label{theorem-2}
We have the following identities 
\begin{align} \label{formula-ckp}
    c'_{k,p}&=(-1)^kq^{\frac{k(k+3)}{4}}\sum_{k=k_|p|\geq \cdots \geq k_1\geq 0}\prod_{i=1}^{p-1}q^{k_i^{2}+k_i}\left[
\begin{array}{@{\,}c@{\,}}k_{i+1} \\ k_i \end{array} \right]_{q}  \ \ (\text{if $p>0$}), \\ \nonumber    
 c'_{k,p}&=q^{\frac{-k(k+3)}{4}}\sum_{k=k_p\geq \cdots \geq k_1\geq 0}\prod_{i=1}^{|p|-1}q^{-k_ik_{i+1}-k_i}\left[
\begin{array}{@{\,}c@{\,}}k_{i+1} \\ k_i \end{array} \right]_{q}  \ \ (\text{if $p<0$}),
\end{align}
\begin{align} \label{formula-cks2}
   \tilde{c}'_{k,\frac{s}{2}}=(-1)^kq^{\frac{k(k+3)}{4}}\sum_{k=k_m\geq \cdots\geq k_1\geq 0 }\frac{1}{(q;q)_{k_1}}\prod_{i=1}^{m-1}q^{k_i^2+k_i}\left[
\begin{array}{@{\,}c@{\,}}k_{i+1} \\ k_i \end{array} \right]_q,
\end{align}
\begin{align} \label{formula-dkjp0}
    d_{k,j,p}&=(-1)^{k-j}q^{(j+1)(j-k)-pj(j+2)}\frac{1}{(q;q)_{k-j}}\sum_{k-j=k_p\geq \cdots\geq k_1\geq 0 }\prod_{i=1}^{p-1}q^{-k_ik_{i+1}-k_i}\left[
\begin{array}{@{\,}c@{\,}}k_{i+1} \\ k_i \end{array} \right]_q \ \ (\text{if $p>0$}),\\ \nonumber 
d_{k,j,p}&=q^{\frac{k(k+3)-j(j+3)}{2}+|p|j(j+2)}\frac{1}{(q;q)_{k-j}}\sum_{k-j=k_{|p|}\geq \cdots\geq k_1\geq 0 }\prod_{i=1}^{|p|-1}q^{k_i^2+k_i}\left[
\begin{array}{@{\,}c@{\,}}k_{i+1} \\ k_i \end{array} \right]_q   \ \ (\text{if $p<0$}).  
\end{align}
\end{theorem}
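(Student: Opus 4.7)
The plan is to convert each of the three single-sum formulas into standard $q$-hypergeometric form and then apply an iterated Bailey transform in the style of Andrews \cite{And84}. First I rewrite $\{n\}! = \mathfrak{q}^{-n(n+1)/2}(q;q)_n$ with $q = \mathfrak{q}^2$, so each denominator $\{j+l+1\}!\{j-l\}!$ becomes a ratio of $q$-Pochhammer symbols and each defining sum takes the shape of one half of a Bailey pair relative to $a=q$ (or $a=1$ after an index shift). The prefactors $(-1)^k q^{\pm k(k+3)/4}$ in the claimed formulas can then be read off as the overall normalization produced by this rewrite, and the problem reduces to showing that the $l$-sum defining $c'_{k,p}$ (respectively $\tilde{c}'_{k,s/2}$, $d_{k,j,p}$) evaluates to the claimed nested product.

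For $c'_{k,p}$ with $p>0$, the key observation is that the $p$-dependence enters only through $\mathfrak{q}^{2pl(l+1)} = q^{pl(l+1)}$, which splits multiplicatively as $q^{l(l+1)} \cdot q^{(p-1)l(l+1)}$. This splitting is precisely the effect of one iteration of Andrews' Bailey chain: given a Bailey pair $(\alpha_l,\beta_l)$, inserting a factor $q^{l(l+1)}$ on the $\alpha$ side produces a new Bailey pair whose $\beta$ side acquires an extra convolution with the kernel $q^{k^2+k}$ times a $q$-binomial coefficient. Iterating the chain $p-1$ times from the unit Bailey pair then produces exactly the nested product in \eqref{formula-ckp}, and a direct evaluation of the base case $p=1$ supplies the prefactor $(-1)^k q^{k(k+3)/4}$. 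For $p<0$ I substitute $q \mapsto q^{-1}$, which conjugates each chain step and yields the variant exponent $q^{-k_ik_{i+1}-k_i}$ as claimed.

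The formula \eqref{formula-cks2} for $\tilde{c}'_{k,s/2}$ is derived by the same iterated-chain argument with $s = 2m-1$ and $m-1$ chain steps. The residual factor $1/(q;q)_{k_1}$ appears because the exponent $\mathfrak{q}^{sl(l+1)} = q^{(m-1/2)l(l+1)}$ is a half-step shift of the $c'$ exponent; at the innermost summation this leaves behind the base Bailey value $1/(q;q)_{k_1}$ rather than another kernel factor $q^{k_0^2+k_0}$ times a $q$-binomial. The derivation of \eqref{formula-dkjp0} is parallel: the $i$-sum in $d_{k,j,p}$ has the same Bailey-chain structure as $c'_{k,p}$, with $j$ entering only as an inert parameter after the substitution $i \mapsto i+1$, so $|p|-1$ iterations of the chain produce the nested sum, while the prefactor $(-1)^{k-j}q^{(j+1)(j-k) - pj(j+2)}$ is obtained by collecting the exponents generated during the $q$-hypergeometric rewrite.

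The main obstacle will be the careful bookkeeping needed to match the exact normalization of Andrews' Bailey chain with the exponents and signs in the definitions of $c'_{k,p}$, $\tilde{c}'_{k,s/2}$, and $d_{k,j,p}$, and in particular to pin down the specific seed Bailey pair (a variant of the unit pair relative to $a=q$) that yields the correct base case. Once the base case is verified by a direct closed-form evaluation of the $l$-sum at $p=1$ (respectively $m=1$), the inductive step is mechanical and all three identities of Theorem \ref{theorem-2} follow.
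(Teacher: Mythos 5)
Your plan is essentially the paper's own proof: rewrite each single sum in $q$-Pochhammer form, recognize it as the $\beta$-side of an iterated Bailey chain in the sense of Andrews, and obtain the multisum by unfolding the chain from a suitable seed pair, with the $p<0$ cases handled by the substitution $q\mapsto q^{-1}$. Two details in your outline would need correction before the argument closes, both concerning the seed pairs you defer identifying. First, for \eqref{formula-cks2} the residual factor $1/(q;q)_{k_1}$ does not arise from a ``half-step shift'' of the unit pair: the paper seeds the chain with Warnaar's Bailey pair $\alpha^{(1)}_k=q^{k^2}(1-q^{2k+1})/(1-q)$, $\beta^{(1)}_k=1/(q;q)_k^2$, and it is the extra factor of $1/(q;q)_k$ in this $\beta$ (relative to the unit pair's $\beta_k=1/(q;q)_k$) that survives as $1/(q;q)_{k_1}$ after one copy is absorbed into the innermost $q$-binomial; if you tried to run the unit-pair chain with a half-integer exponent you would not land on a Bailey pair at all. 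Second, for \eqref{formula-dkjp0} the parameter $j$ is not inert: after the substitution $i=l+j$ the sum is a Bailey-pair relation relative to $x=q^{2j+2}$ rather than $x=q$, so the entire chain (and the seed, which is the $x$-dependent unit-type pair with $\alpha^{(1)}_l=(-1)^lx^lq^{l^2+l(l-1)/2}\frac{1-xq^{2l}}{1-x}\frac{(x;q)_l}{(q;q)_l}$) must be run at that base; the derivation then naturally produces $d_{k,j,-p}$ for $p>0$, and the stated $p>0$ formula is recovered by the $q\mapsto 1/q$ inversion you propose, using $\bigl[\begin{smallmatrix}k\\ l\end{smallmatrix}\bigr]_{1/q}=q^{l^2-lk}\bigl[\begin{smallmatrix}k\\ l\end{smallmatrix}\bigr]_q$. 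With those two seeds pinned down, the rest of your outline matches the paper step for step.
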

\begin{remark}
Formula (\ref{formula-ckp}) was firstly proved by Habiro in \cite{Hab00,Hab08} and then rederived by Masbaum \cite{Mas03} by using the Kauffman bracket skein theory. The proof of formulas (\ref{formula-ckp}) and (\ref{formula-cks2}) via the technique of Bailey chains presented here is essentially given in \cite{LO19}. The formula (\ref{formula-dkjp0}) seems new.  
\end{remark}

From the above multiple sum expressions, one can see that 
\begin{align}
    c'_{k,p}\in \mathbb{Z}[\mathfrak{q}^{\pm 1}], \ \tilde{c}'_{k,\frac{s}{2}} \in \frac{1}{\{k\}!}\mathbb{Z}[\mathfrak{q}^{\pm 1}] \ \text{and} \ d_{k,j,p}\in \frac{1}{\{k-j\}!}\mathbb{Z}[\mathfrak{q}^{\pm 1}].
\end{align}
Although Habiro's cyclotomic expansion theorem tells us that $H_{k}(\mathcal{K}_{p,\frac{s}{2}};\mathfrak{q})$ lies in $\mathbb{Z}[\mathfrak{q}^{\pm 1}]$ and one can also verify this fact through numeric computations by using formula (\ref{formula-Hk0}),  it remains a challenge to prove that $H_{k}(\mathcal{K}_{p,\frac{s}{2}};\mathfrak{q})\in \mathbb{Z}[\mathfrak{q}^{\pm 1}]$  by  formula (\ref{formula-Hk0}) in Theorem \ref{theorem-1} directly.

\textbf{Acknowledgements.}
The authors would like to thank Robert Osburn for raising the question to us of how to compute the cyclotomic coefficients for the colored Jones polynomial of the double twist knots in the ``odd-even" case.

\section{Preliminaries}
In this section, we review the main results in \cite{Mas03}. Let $M$ be an oriented 3-manifold, the Kauffman bracket skeim module $\mathcal{K}(M)$ is the free $\mathbb{Z}[A^{\pm 1}]$-module generated by isotopy classes of banded links in $M$ modulo the submodule generated by  the Kauffman bracket skein relation.

By its definition, it is easy to see that the Kauffman bracket $\langle \ \rangle$ of the banded links gives that $\mathcal{K}(S^3)=\mathbb{Z}[A^{\pm 1}]$. We use the normalization that the bracket of the empty link is 1. 

Furthermore, the skein module of the solid torus $S^1\times D^2$ is given by $\mathbb{Z}[A^{\pm 1}][z]$. Usually, we denote this skein module by $\mathcal{B}$. Here $z$ is given by the banded link $S^1\times J$, where $J$ is a small arc lie in the interior of $D^2$, and $z^n$ means $n$-parallel copies of $z$. We define the even part $\mathcal{B}^{ev}$ of $\mathcal{B}$ to be the submodule generated by the even powers of $z$.     

Let $t: \mathcal{B}\rightarrow \mathcal{B}$ denote the twist map induced by a full right handed twist on the solid torus. Then there exists a basis $\{e_i\}_{i\geq 0}$ of eigenvectors for the twist map $t$ (see e.g \cite{BHMV92}), which is defined recursively by 
\begin{align}
    e_0=1, \ e_1=z,  \ e_i=ze_{i-1}-e_{i-2}. 
\end{align}
Moreover, the $e_i$ satisfies
\begin{align}
\langle e_i \rangle&=(-1)^i[i+1]  \\
t(e_i)&=\mu_i e_i
\end{align}
where $\mu_i=(-1)^iA^{i^2+2i}$ is also called the framing factor which provides the contribution of a framing adding to a banded link.  

\begin{definition}
Given a knot $\mathcal{K}$ with zero framing, the $N$-th colored Jones polynomial $J_{N}(\mathcal{K};\mathfrak{q})$ of $\mathcal{K}$ is defined to be the Kauffman bracket of $\mathcal{K}$ cabled by $(-1)^{N-1}e_{N-1}$, i.e.
\begin{align}
    J_{N}(\mathcal{K};\mathfrak{q})=(-1)^{N-1}\langle\mathcal{K}(e_{N-1}) \rangle
\end{align}
where the factor of $(-1)^N$ is included such that for the unknot $U$, $J_{N}(U;\mathfrak{q})=[N]$. Furthermore, the normalized $N$-th colored Jones polynomial of $\mathcal{K}$ is defined as
\begin{align}
    J'_{N}(\mathcal{K};\mathfrak{q})=\frac{\langle \mathcal{K}(e_{N-1})\rangle}{\langle e_{N-1}\rangle}.
\end{align}
\end{definition}

Let $\lambda_i=-\mathfrak{q}^{i+1}-\mathfrak{q}^{-i-1}$, we define the skein element $R_n\in\mathcal{B}$ as follow 
\begin{align}
R_n=\prod_{i=0}^{n-1}(z-\lambda_{2i}).    
\end{align}
Then $R_n$ forms a basis of $\mathcal{B}$. Therefore, there are coefficients $t_{k,i}$ and $s_{i,j}$ such that 
\begin{align} \label{formula-Rk}
    R_k=\sum_{i=0}^kt_{k,i}e_i 
\end{align}
and
\begin{align} \label{formula-ei}
e_i=\sum_{j=0}^is_{i,j}R_j.
\end{align}

By induction, we can check that 
\begin{align}
    t_{k,i}=\frac{\{2k+1\}!\{2i+2\}}{\{k+i+2\}!\{k-i\}!}
\end{align}
and 
\begin{align}
    s_{i,j}=(-1)^{i+j}\left[
\begin{array}{@{\,}c@{\,}}i+1+j \\ i-j \end{array} \right].
\end{align}

We also need the following two properties for $R_k$: 
\begin{align} \label{formula-Rke2i}
    \langle R_k,e_{2i}\rangle=0 \ \text{for} \ i<k,
\end{align}
and 
\begin{align}
    \langle R_k,e_{2k} \rangle=(-1)^k\frac{\{2k+1\}!}{\{1\}!}.
\end{align}

Habiro \cite{Hab08} introduced the following skein element $\omega_+=\sum_{k=0}^{\infty} c_{n,+}R_k\in \mathcal{B}$, where $c_{k,+}$ is given by  
\begin{align} \label{formula-Ck+}
    c_{k,+}=(-1)^k\frac{\mathfrak{q}^{k(k+3)/2}}{\{k\}!}.
\end{align}
A key property that $\omega_+$ satisfied is that 
\begin{align}
    \langle\omega_+,x \rangle=\langle t(x)\rangle
\end{align}
for any $x\in \mathcal{B}^{ev}$. Habiro proved this property by using the relationship with the quantum group $U_{q}(sl_2)$. In \cite{Mas03}, Masbaum gave another combinatoric proof by using the method of orthogonal polynomials developed in \cite{BHMV92}.

We write $\omega=\omega_+$ and put
\begin{align}
    \omega^p=\sum_{k=0}^{\infty}c_{k,p}R_k.
\end{align}
Since circling with $\omega^p$ is the same as circling with $p$ parallel copies of $\omega$, by formula (\ref{formula-Ck+}),  we have 
\begin{align}
    \langle\omega^p,x\rangle=\langle t^p(x)\rangle
\end{align}
for every even $x\in \mathcal{B}^{ev}$.  By using the Kauffman bracket theory \cite{MV94}, Masbaum computed the coefficient $c_{k,p}$ which is given as follows (see formula (33) in \cite{Mas03})

\begin{align}
    c_{k,p}=\sum_{l=0}^k\frac{(-1)^l\mathfrak{q}^{2pl(l+1)}\{2l+1\}}{\{k+l+1\}!\{k-l\}!}.
\end{align}
In particular,  we have
\begin{align}
    c_{k,1}=\frac{(-1)^k\mathfrak{q}^{\frac{k(k+3)}{2}}}{\{k\}!}.
\end{align}

For simplicity, we introduce the notation 
\begin{align}
  c'_{k,p}=\{k\}!c_{k,p}.   
\end{align}
By using the method used in \cite{Mas03},  we can derive the following formula for the colored Jones polynomial of the double twist knot $\mathcal{K}_{p,r}$ (see formula (6.7) in \cite{Lau10})   
\begin{align} \label{formula-JKps}
    J'_N(\mathcal{K}_{p,r};\mathfrak{q})=\sum_{k=0}^{N-1}(-1)^kc'_{k,p}c'_{k,r}\frac{\{N+k\}!}{\{N-1-k\}!\{N\}}. 
\end{align}

In order to prove the cyclotomic expansion formula for twist knot, Masbaum also gave another expression for the formula $c'_{k,p}$ (see formula (46) in \cite{Mas03}). The main advantage of this expression for $c'_{k,p}$ is that one can see that
\begin{align} \label{formula-ckp-integrality}
  c'_{k,p}\in \mathbb{Z}[\mathfrak{q}^{\pm 1}]
\end{align}
directly. 

\begin{remark}
By using the $q$-Pochhammer symbol,  we obtain
\begin{align} \label{formula-kckp}
c'_{k,p}=(-1)^kq^{\frac{k^2+3k}{4}}\sum_{l=0}^{k}(-1)^lq^{l(l+1)p+\frac{l(l-1)}{2}}\frac{(1-q^{2l+1})(q;q)_k}{(q;q)_{k+l+1}(q;q)_{k-l}},
\end{align}  
and
\begin{align}
\frac{\{N+k\}!}{\{N-1-k\}!\{N\}!}=(-1)^kq^{-\frac{k(k+1)}{2}}(q^{1-N};q)_k(q^{1+N};q)_k.    
\end{align}
Then the formula (\ref{formula-JKps}) can be rearranged as follows
\begin{align}
    J'_N(\mathcal{K}_{p,r};q)&=\sum_{k=0}^{N-1}\left(\sum_{l=0}^{k}(-1)^lq^{l(l+1)p+\frac{l(l-1)}{2}}\frac{(1-q^{2l+1})(q;q)_k}{(q;q)_{k+l+1}(q;q)_{k-l}}\right)\\\nonumber
    &\cdot\left(\sum_{l=0}^{k}(-1)^lq^{l(l+1)r+\frac{l(l-1)}{2}}\frac{(1-q^{2l+1})(q;q)_k}{(q;q)_{k+l+1}(q;q)_{k-l}}\right)q^k(q^{1-N};q)_k(q^{1+N};q)_k.
\end{align}
We will show in Section \ref{Section-Bailey}  that one can also prove that formula (\ref{formula-kckp}) lies in $\mathbb{Z}[\mathfrak{q}^{\pm 1}]$ by using the method of Bailley chain.   
\end{remark}

In order to prove our main theorem, we also need to recall some basic facts about the Kauffman bracket of addmissibly colored banded trivalent graphs as in \cite{Mas03,MV94}. A color is just an integer $\geq 0$. A triple of colors $(a,b,c)$ is admissible if $a+b+c\equiv 0 \ (\mod 2) $ and $|a-b|\leq c\leq |a+b|$.  Let $D$ be a planar diagram of a banded trivalent graph. An admissible coloring of $D$ is an assignment of colors meeting there form an admissible triple. The Kauffman bracket of $D$ is defined to be the bracket of the expansion of $D$ obtained as follows. The expansion of an edge colored $n$ consists of $n$ parallel strands with a copy of the $n$-th Jones-Wenzl idempotent inserted. The expansion of a vertex is defined as in  Figure 2, where the internal colors $i,j,k$ are defined by 
\begin{align}
    i=(b+c-a)/2,  \ j=(c+a-b)/2, \ k=(a+b-c)/2.
\end{align}

\begin{figure}[!htb] 
\begin{align}
\raisebox{-15pt}{
\includegraphics[width=60 pt]{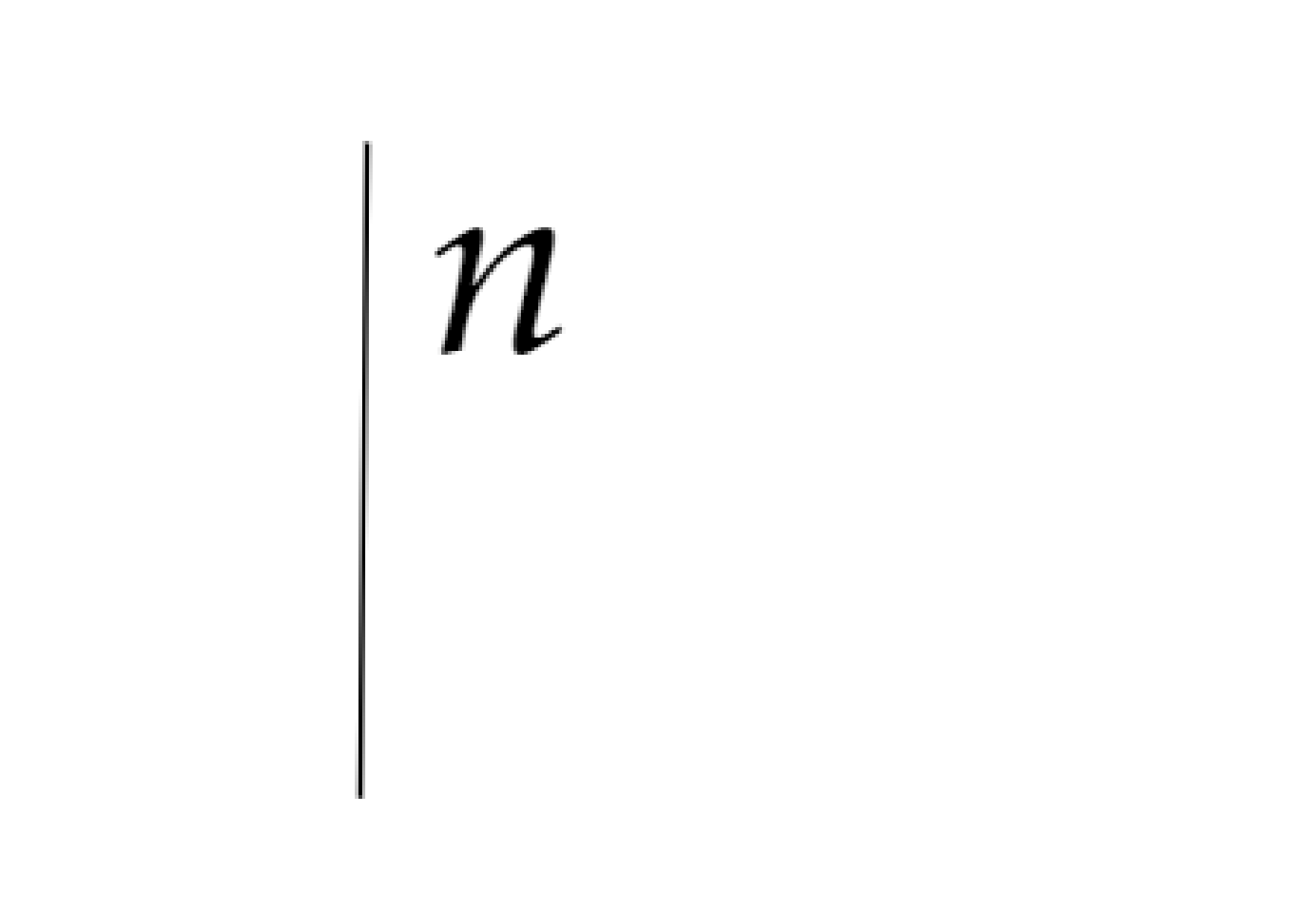}}=\raisebox{-15pt}{\includegraphics[width=60 pt]{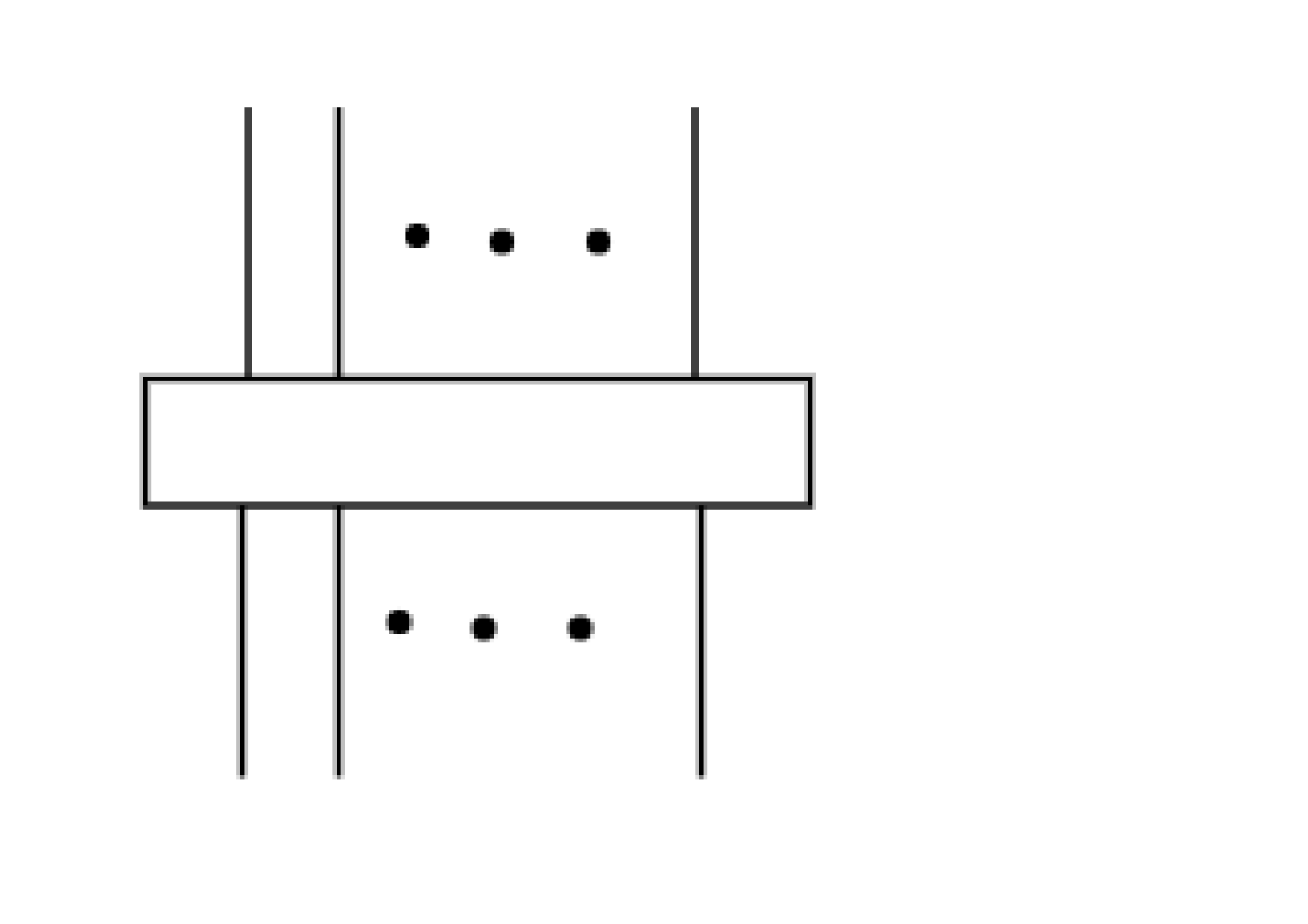}} 
\raisebox{-15pt}{
\includegraphics[width=60 pt]{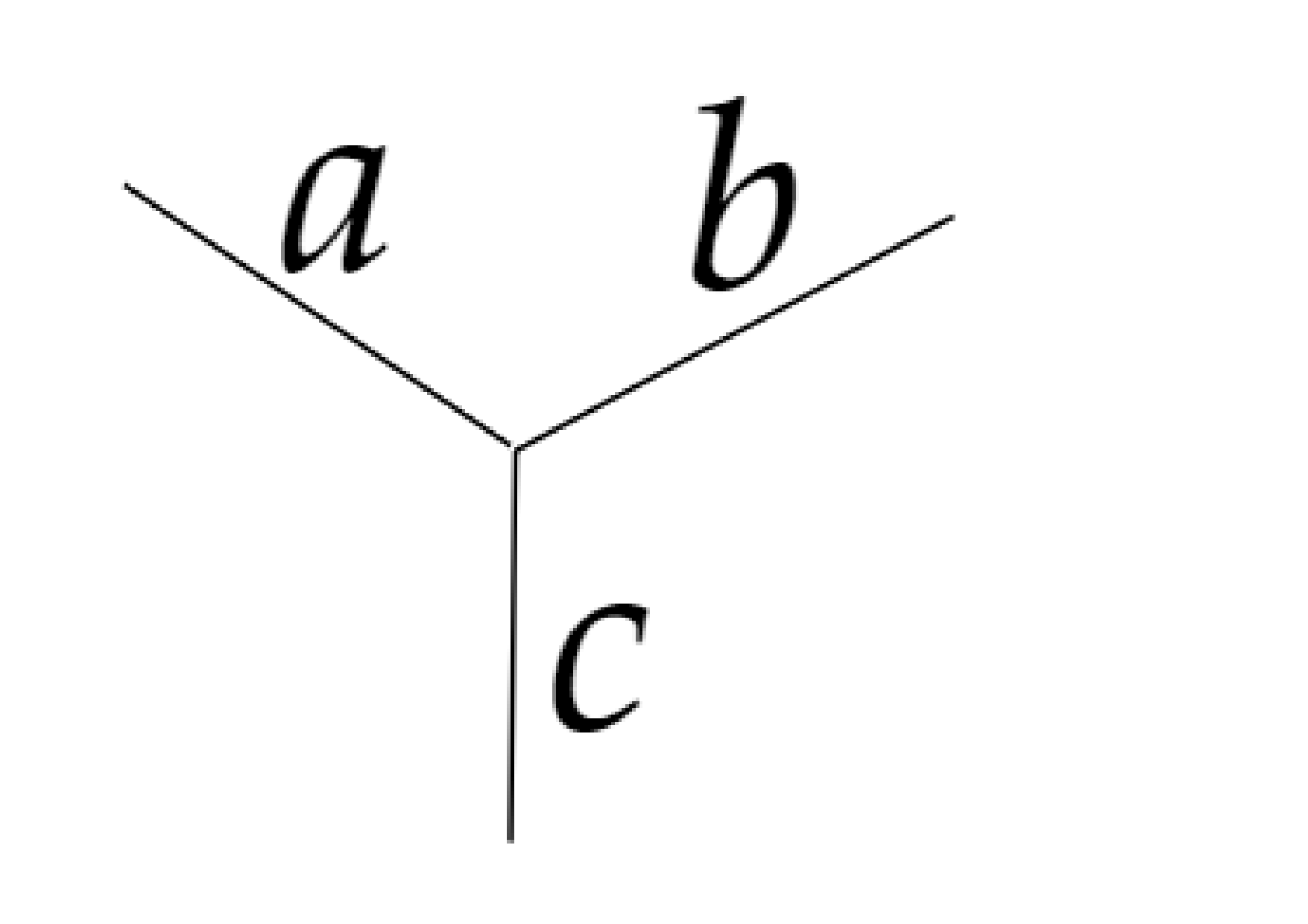}}=\raisebox{-15pt}{\includegraphics[width=60 pt]{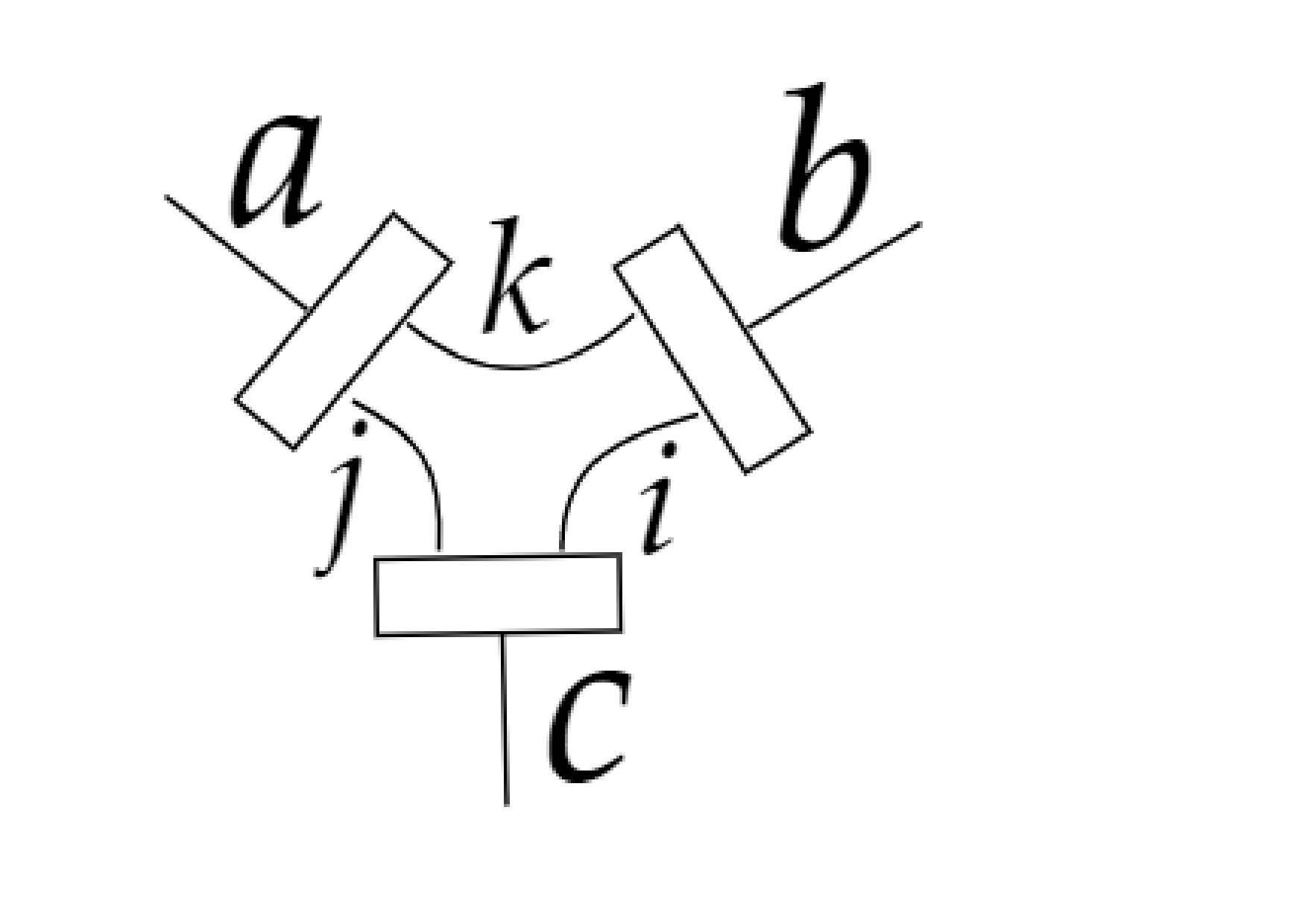}} 
\end{align}
\caption{n-th Jones-Wenzl idempotent and the expansion of a vertex}
\end{figure}

We have the fusion equation 

\begin{figure}[!htb]  
\begin{align} \label{formula-ab}
\raisebox{-25pt}{
\includegraphics[width=70 pt]{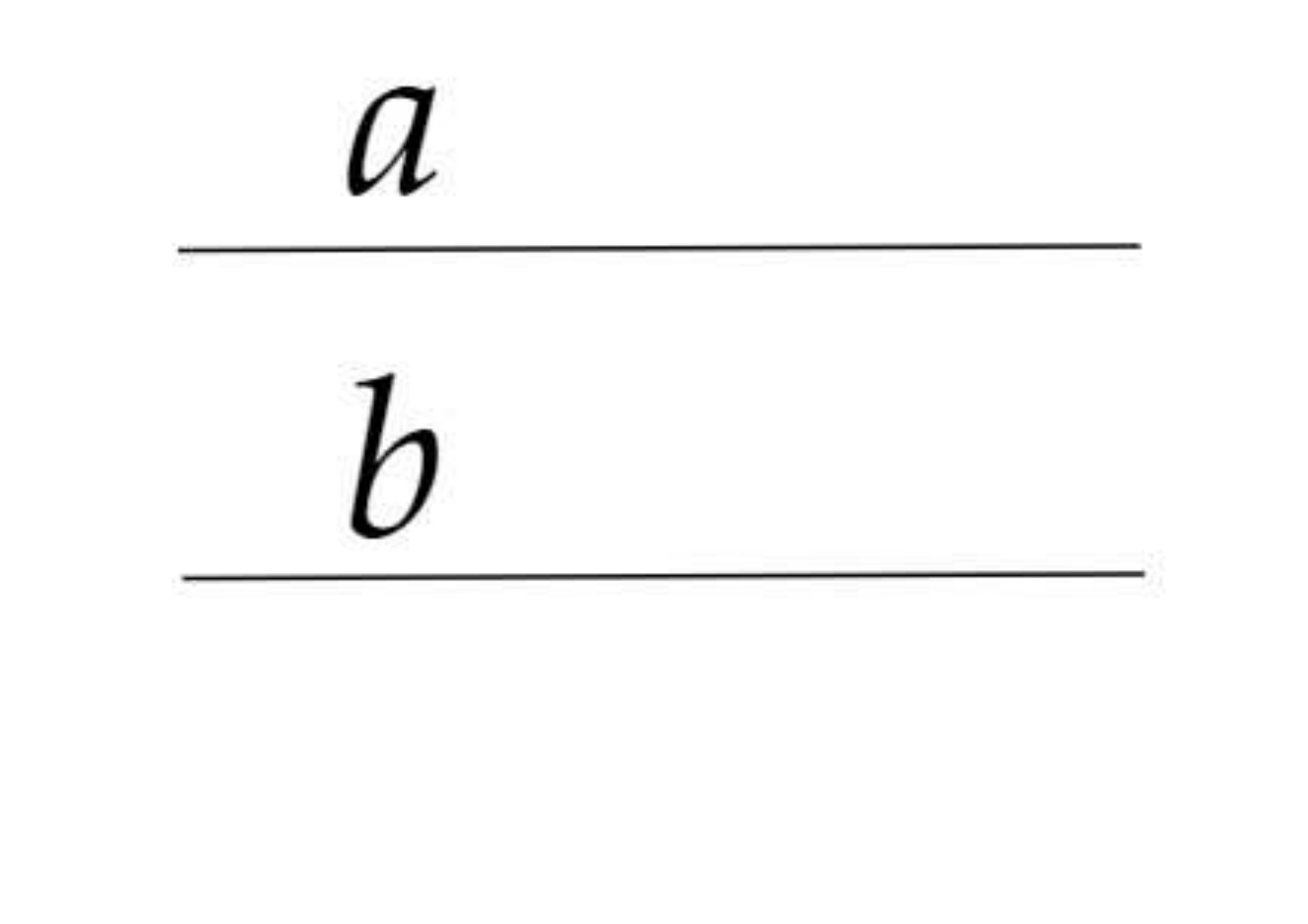}}&=\sum_{c}\frac{\langle c\rangle}{\langle a,b,c\rangle}\raisebox{-30pt}{
\includegraphics[width=70 pt]{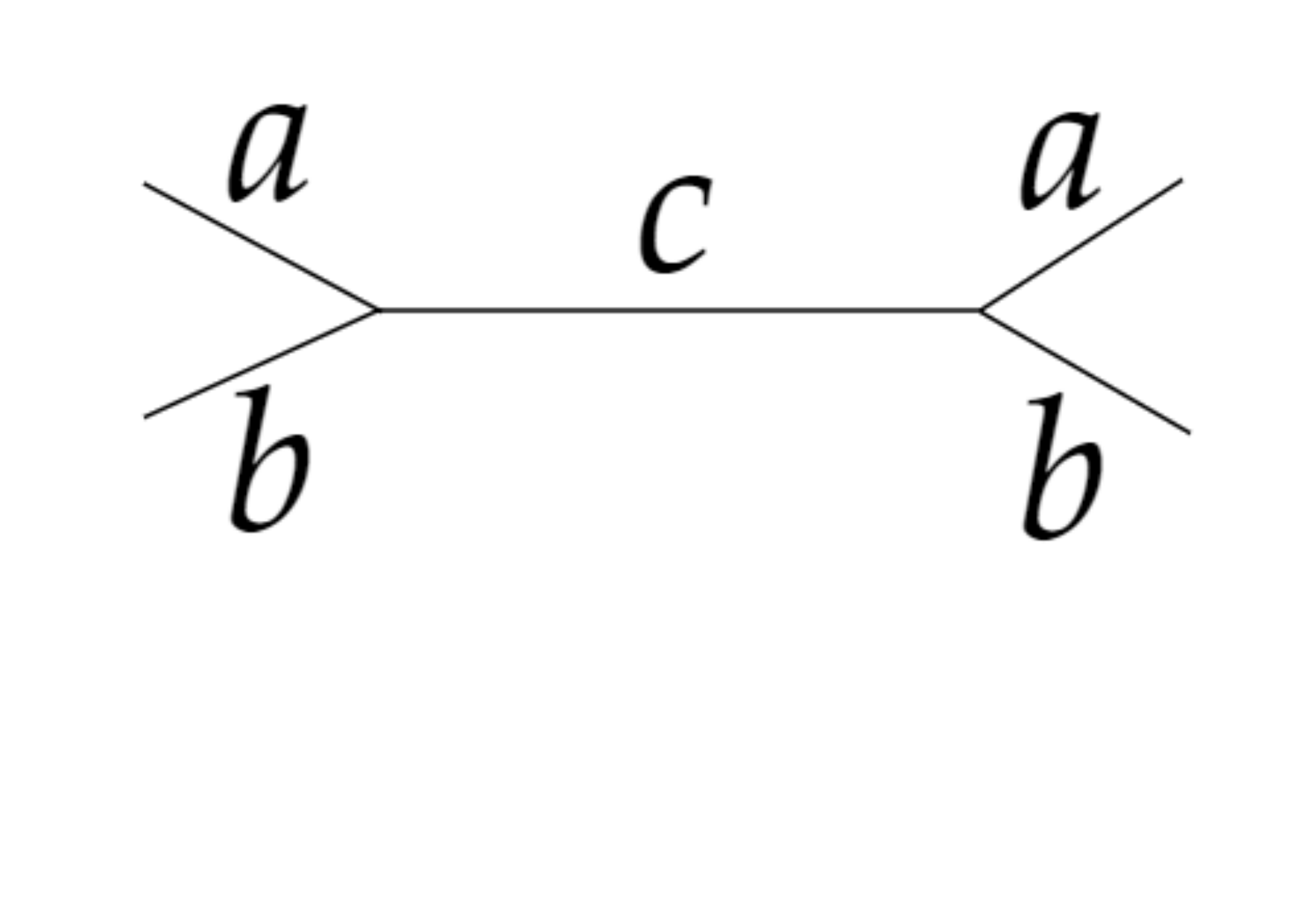}}
\end{align}
\caption{Fusion equation}
\end{figure}
where the sum is over the colors $c$ such that the triple $(a,b,c)$ is admissible, and we have 
\begin{align}
    \langle c \rangle=\langle e_c\rangle=(-1)^c[c+1]
\end{align}
and
\begin{align}
\langle a,b,c\rangle=\sum_{c}\frac{\langle c\rangle}{\langle a,b,c\rangle}\raisebox{-15pt}{
\includegraphics[width=60 pt]{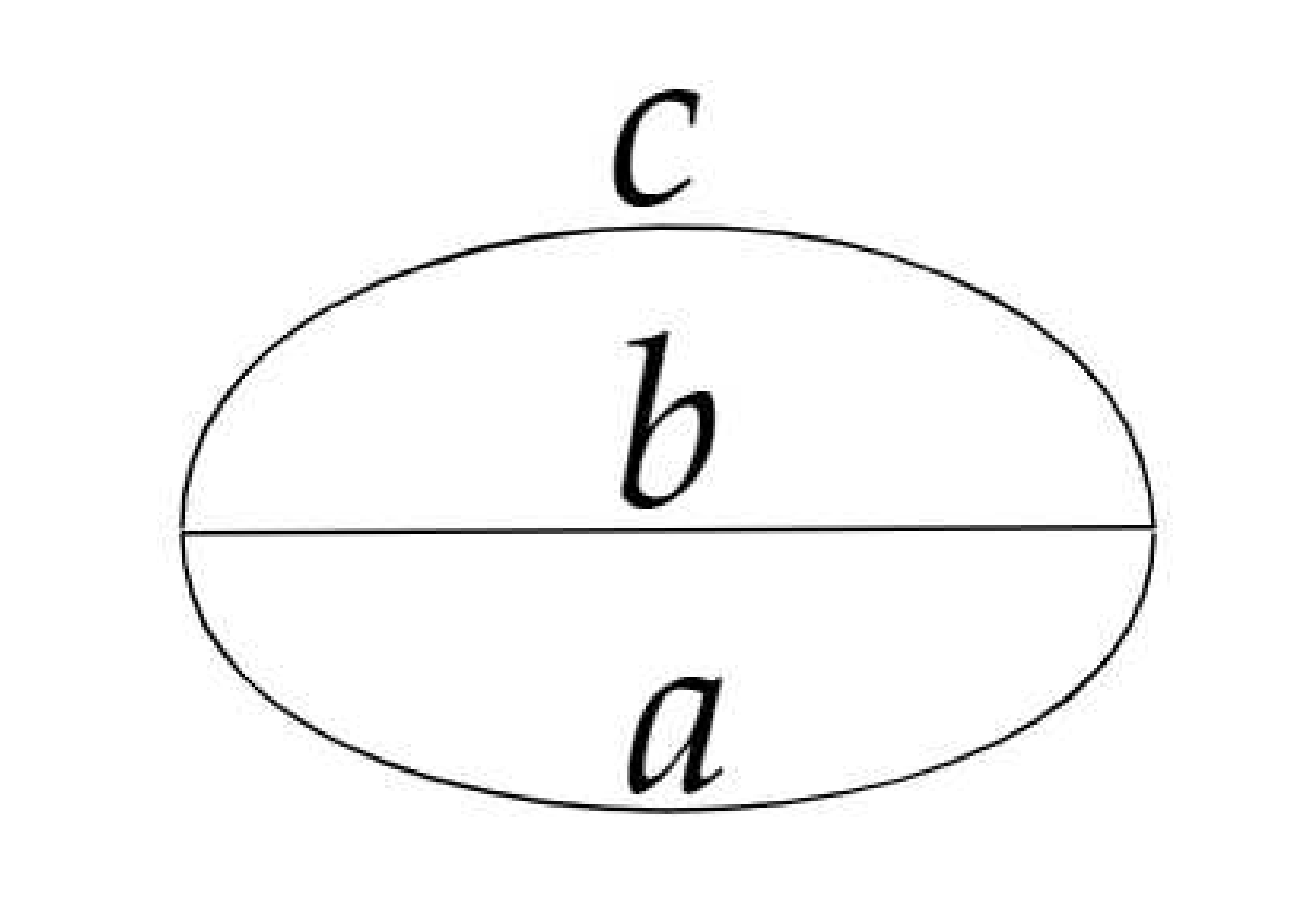}}=(-1)^{i+j+k}\frac{[i+j+k+1]![i]![j]![k]!}{[a]![b]![c]!}.
\end{align}
In particular, $\langle n,n,2n \rangle=\langle 2n \rangle$, and hence we have
\begin{align}
\raisebox{-25pt}{
\includegraphics[width=70 pt]{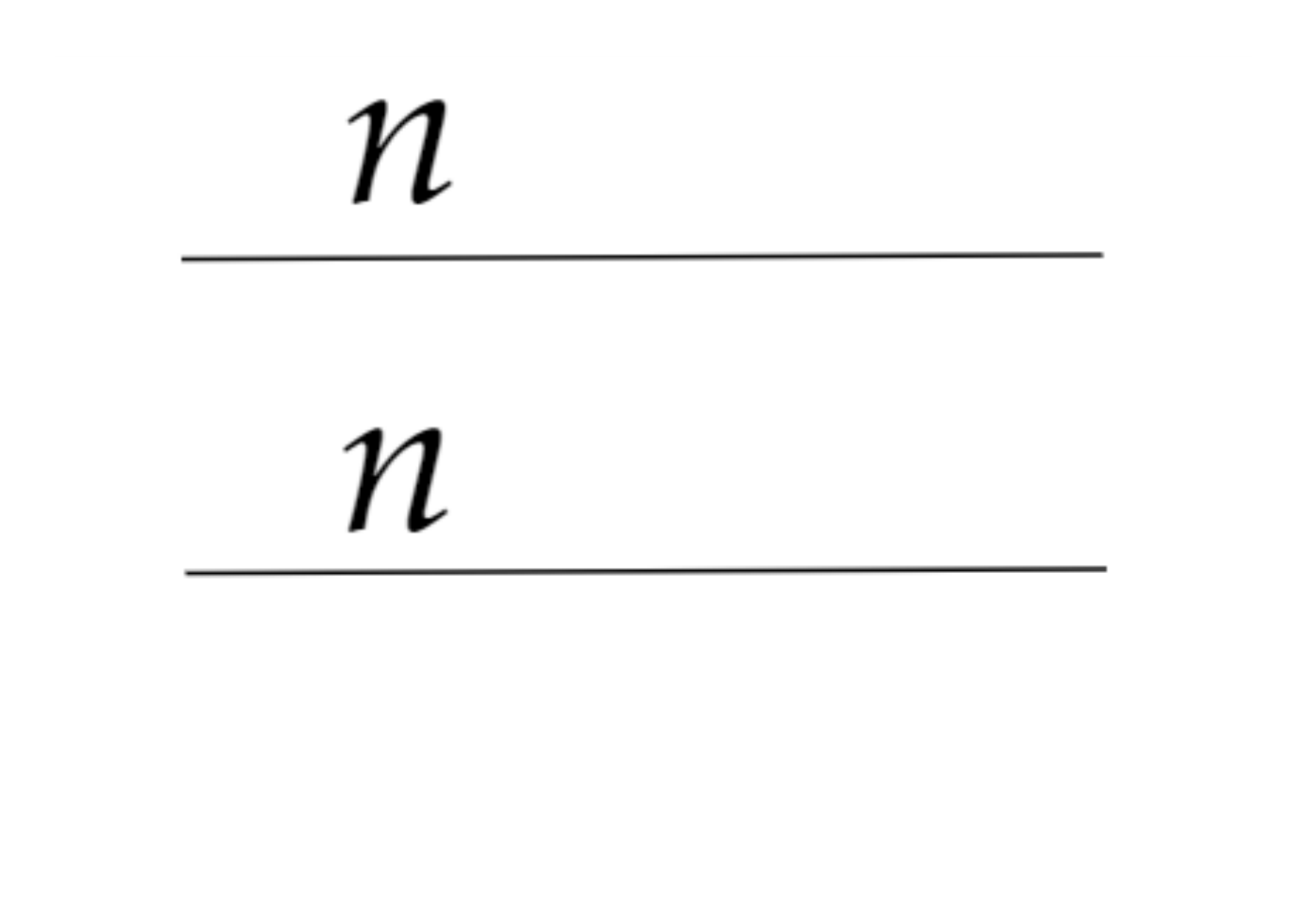}}=\raisebox{-35pt}{
\includegraphics[width=80 pt]{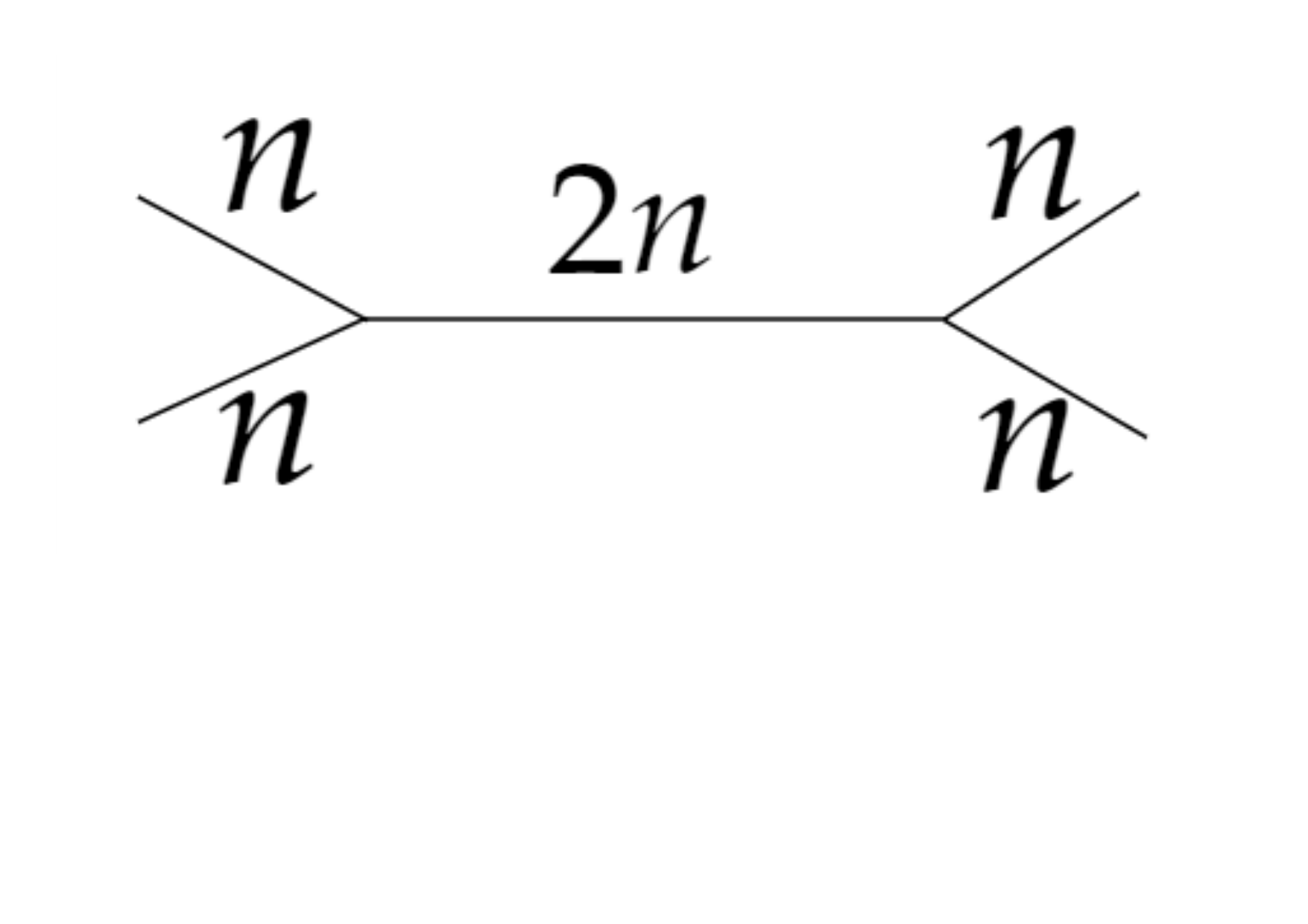}}+\cdots.
\end{align}

As an easy consequence of  the formula for the tetrahedron coefficient given in \cite{MV94}, we have
\begin{align} \label{formula-tetra}
\raisebox{-40pt}{
\includegraphics[width=120 pt]{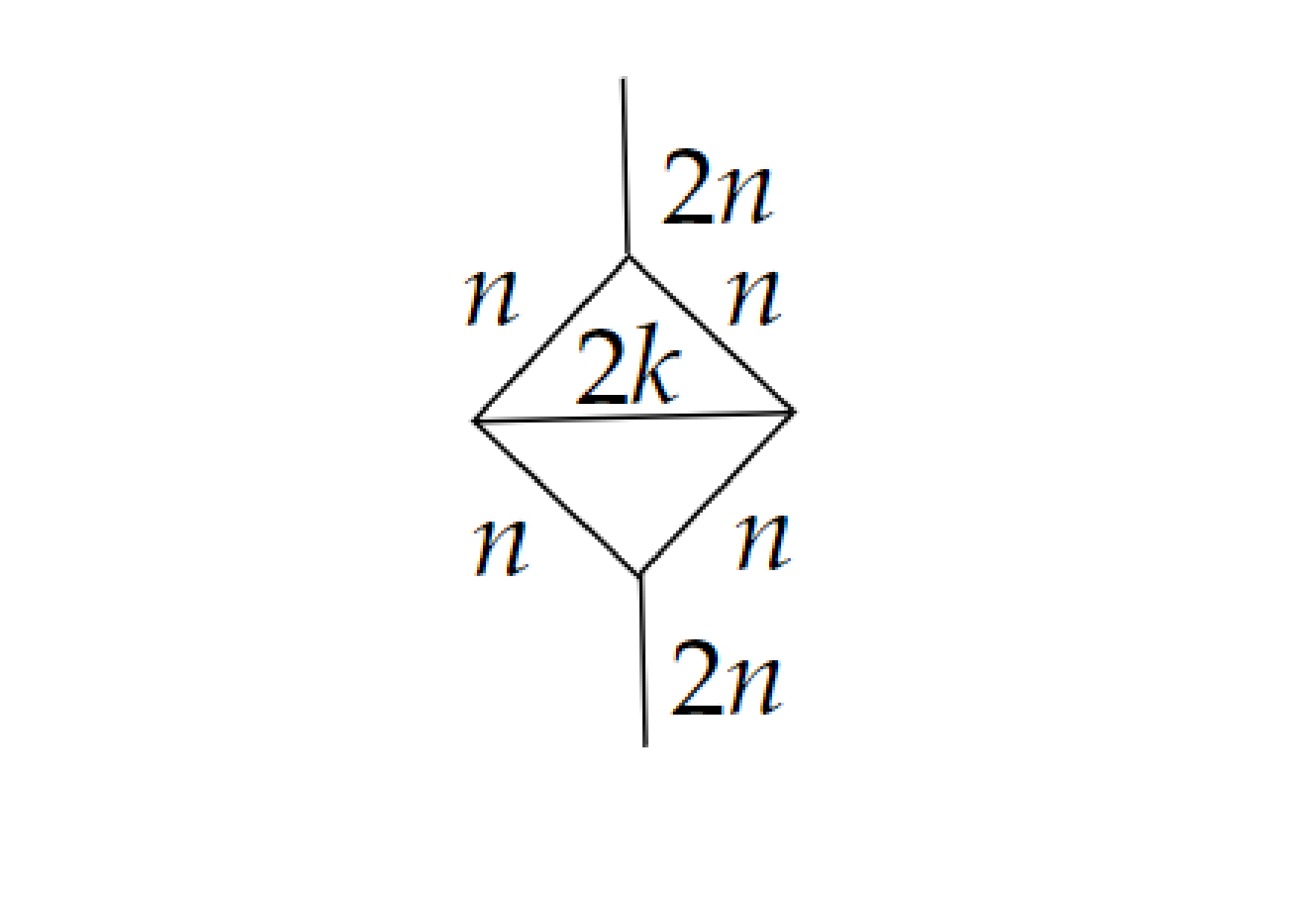}}=\frac{([k]!)^2}{[2k]!}\raisebox{-35pt}{
\includegraphics[width=120 pt]{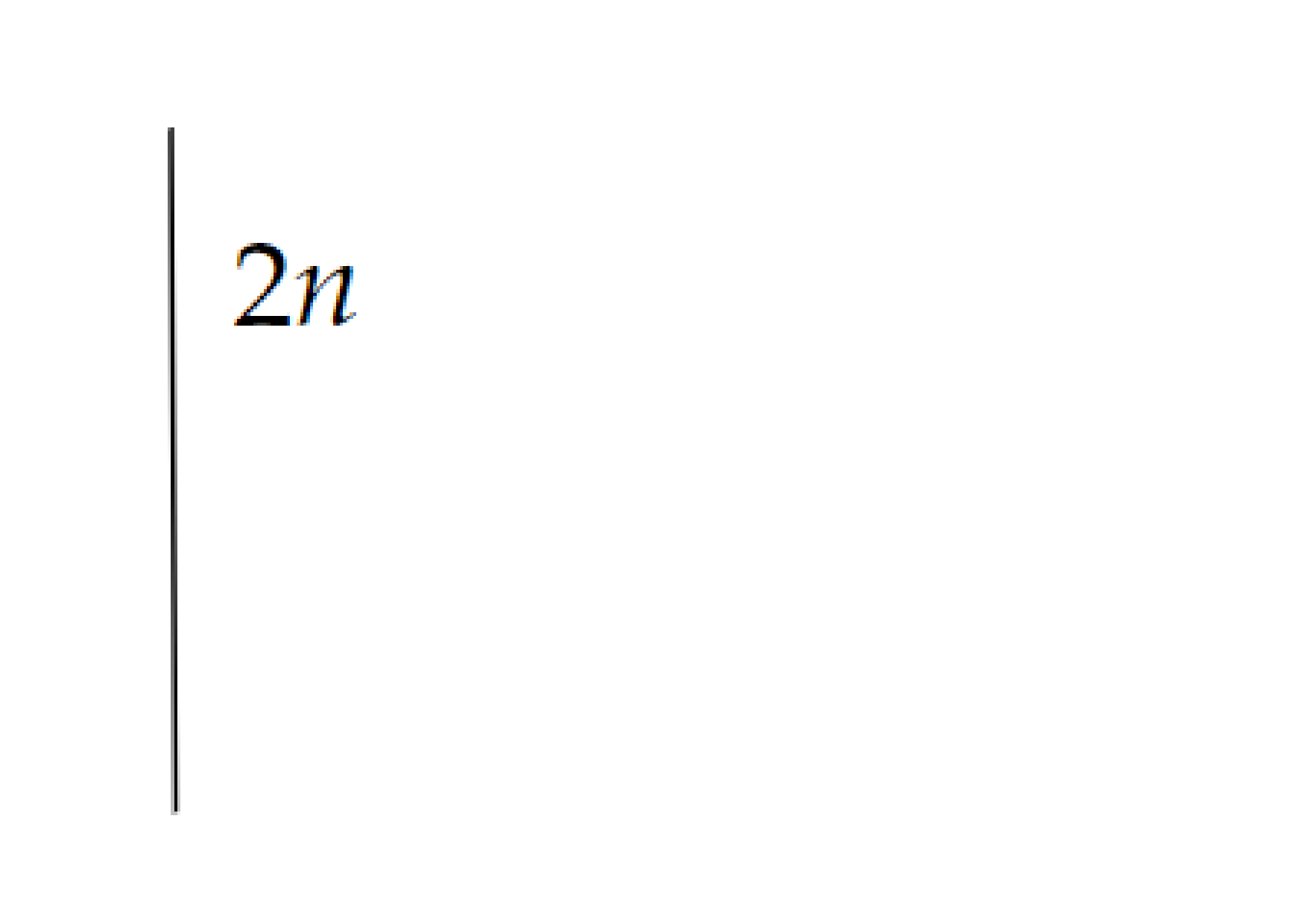}},
\end{align}
for $0\leq k\leq n$. 

Finally, we need the following formula (cf. Theorem 3 in \cite{MV94})
\begin{align} \label{formula-twist}
\raisebox{-30pt}{
\includegraphics[width=80 pt]{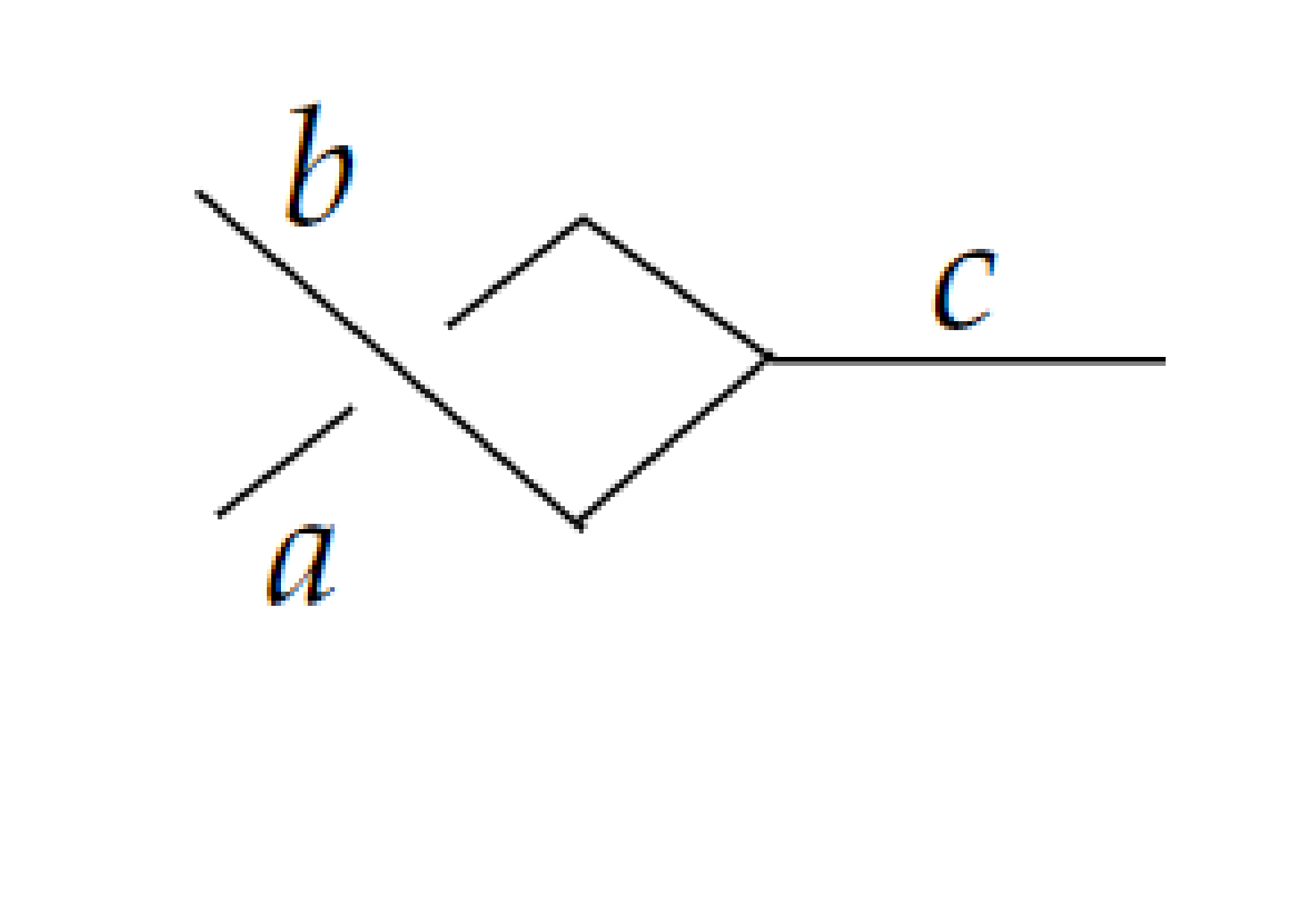}}=\delta(c;a,b)\raisebox{-20pt}{
\includegraphics[width=60 pt]{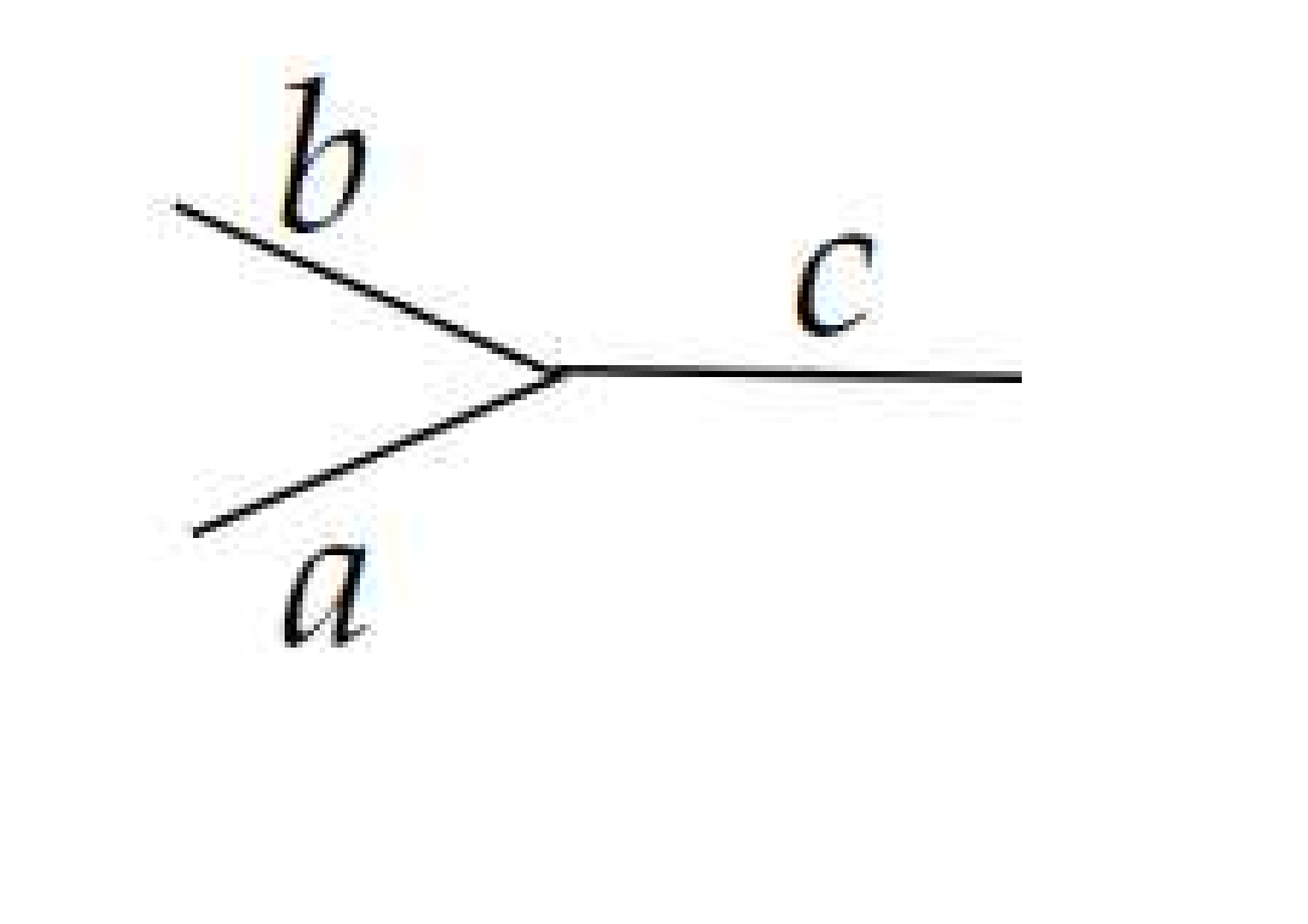}}
\end{align}
where the coefficient $\delta(c;a,b)$ is given by 
\begin{align} \label{formula-delta}
    \delta(c;a,b)=(-1)^{\frac{a+b-c}{2}}A^{-a-b+c-\frac{a^2+b^2-c^2}{2}}
\end{align}
and 
\begin{align} \label{formula-delta2}
    \delta(c;a,b)^2=\frac{\mu_c}{\mu_a\mu_b}.
\end{align}

\section{Proof of the cyclotomic expansion formula}
We write
 $t^p(R_k)=\sum_{j=0}^kd_{k,j}^{(p)}R_j$, then we have 
 \begin{lemma} \label{lemma-djk}
 The coefficient $d_{k,j}^{(p)}$ is given by
 \begin{align}
     d_{k,j}^{(p)}=\sum_{i=j}^k(-1)^{i(1+p)+j}\frac{\{2k+1\}!\{2i+2\}!}{\{k+i+2\}!\{k-i\}!}\mathfrak{q}^{p(i+2)i/2}\frac{\{i+1+j\}!}{\{i-j\}!\{2j+1\}!}.
 \end{align}
 \end{lemma}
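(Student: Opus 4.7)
The plan is a direct two-step change of basis using the fact that $\{e_i\}$ diagonalises the twist map. First I expand $R_k$ in the $e$-basis via formula \eqref{formula-Rk},
\begin{align*}
R_k=\sum_{i=0}^k t_{k,i}\,e_i, \qquad t_{k,i}=\frac{\{2k+1\}!\{2i+2\}}{\{k+i+2\}!\{k-i\}!},
\end{align*}
and apply $t^p$ termwise: since $t(e_i)=\mu_i e_i$ with $\mu_i=(-1)^i A^{i^2+2i}$, I get
\begin{align*}
t^p(R_k)=\sum_{i=0}^k t_{k,i}\,\mu_i^p\,e_i,
\end{align*}
where, using $A^2=\mathfrak{q}$, we have $\mu_i^p=(-1)^{ip}\mathfrak{q}^{pi(i+2)/2}$.

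Next I re-expand each $e_i$ back in the $R$-basis by formula \eqref{formula-ei},
\begin{align*}
e_i=\sum_{j=0}^i s_{i,j}\,R_j, \qquad s_{i,j}=(-1)^{i+j}\frac{\{i+1+j\}!}{\{i-j\}!\{2j+1\}!},
\end{align*}
converting the $[\,\cdot\,]$-binomial into the $\{\,\cdot\,\}$-factorial form (the $\{1\}$-powers cancel because $i+1+j=(i-j)+(2j+1)$). Interchanging the order of summation yields
\begin{align*}
t^p(R_k)=\sum_{j=0}^k\Bigl(\sum_{i=j}^k t_{k,i}\,\mu_i^p\,s_{i,j}\Bigr)R_j,
\end{align*}
so by definition $d_{k,j}^{(p)}$ is the bracketed inner sum. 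Substituting the three explicit expressions and collecting the sign factors $(-1)^{ip}\cdot (-1)^{i+j}=(-1)^{i(1+p)+j}$ produces the claimed formula.

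This is essentially a bookkeeping exercise rather than a step with a genuine obstacle; the only points requiring care are tracking the combined sign $(-1)^{i(1+p)+j}$ and correctly translating the framing factor $A^{p(i^2+2i)}$ into the variable $\mathfrak{q}$. All the substantive content — the closed forms of $t_{k,i}$ and $s_{i,j}$ and the eigenvalue property of $t$ on $\{e_i\}$ — has already been recorded in Section~2, so the proof amounts to assembling these ingredients.
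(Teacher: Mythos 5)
Your proof is correct and is essentially identical to the paper's: both expand $R_k$ in the eigenbasis $\{e_i\}$ via $t_{k,i}$, apply $t^p$ using $t(e_i)=\mu_i e_i$, re-expand via $s_{i,j}$, and read off $d_{k,j}^{(p)}=\sum_{i=j}^k t_{k,i}\mu_i^p s_{i,j}$. (Note that the $\{2i+2\}!$ in the stated lemma appears to be a typo for $\{2i+2\}$, consistent with the definition of $t_{k,i}$ and with the later formula for $d_{k,j,p}$; your version uses the correct factor.)
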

\begin{proof}
This formula is a straightforward consequence of the formulas (\ref{formula-Rk}) and  (\ref{formula-ei}). Indeed, 
\begin{align}
    t^p(R_k)&=t^p(\sum_{i=0}^k t_{k,i}e_i) =\sum_{i=0}^kt_{k,i}t^p( e_i) =\sum_{i=0}^kt_{k,i}\mu_i^{p}e_i
    =\sum_{i=0}^k\mu_i^{p}\sum_{j=0}^is_{i,j}R_j\\\nonumber
    &=\sum_{j=0}^k\sum_{i=j}^kt_{k,i}\mu_i^ps_{i,j} R_j. 
\end{align}
So we have 
\begin{align}
    d_{k,j}^{(p)}=\sum_{i=j}^kt_{k,i}\mu_i^ps_{i,j}
\end{align}
\end{proof}

\begin{lemma} \label{Lemma-2n2n}
We have the following formula 
\begin{align}
\raisebox{-40pt}{
\includegraphics[width=110 pt]{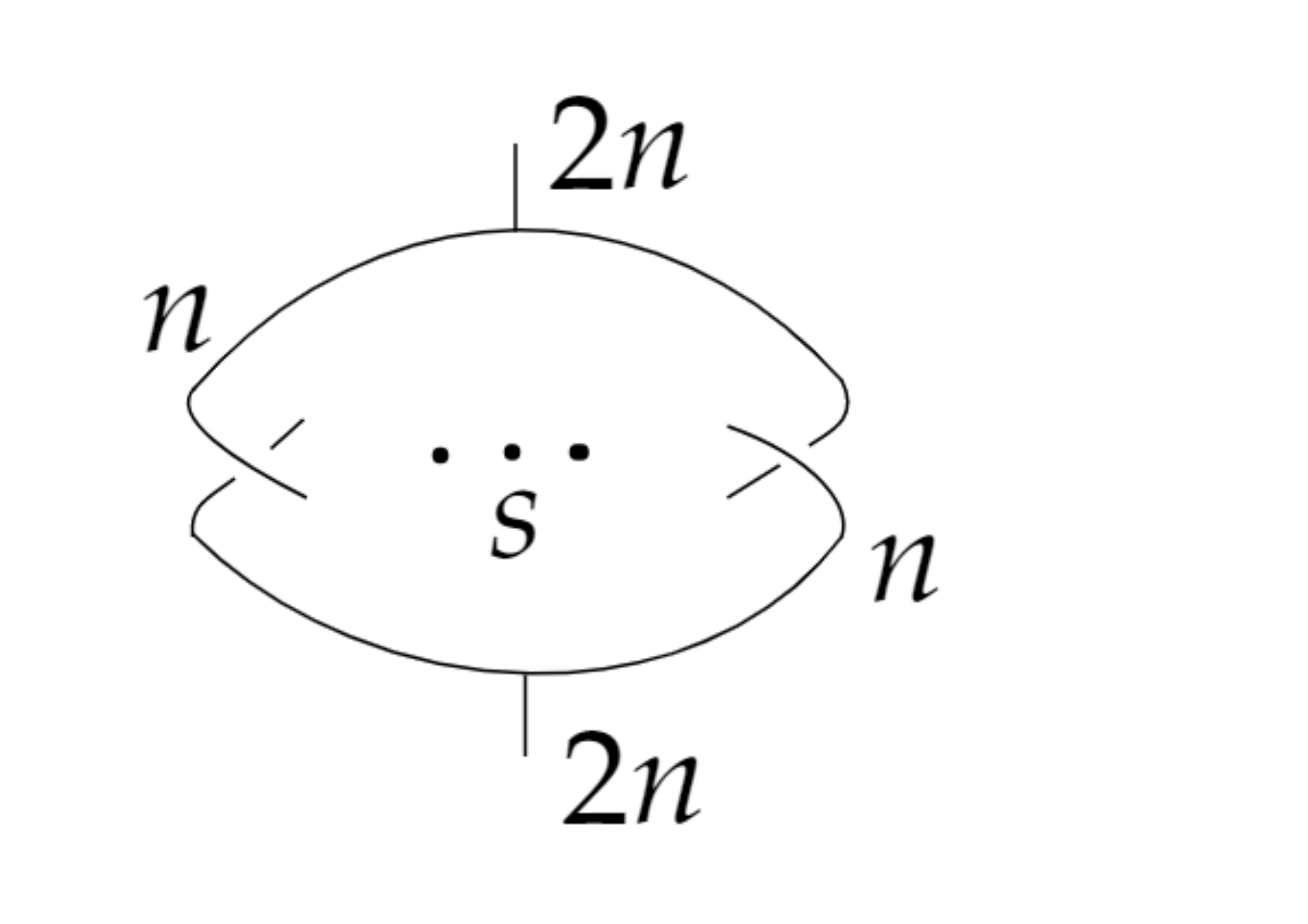}}&=\sum_{k=0}^n\frac{(-1)^n\mu_{2k}^{\frac{s}{2}}}{\mu_n^s}\frac{[2k+1]([n]!)^2}{[n+k+1]![n-k]!}\raisebox{-40pt}{
\includegraphics[width=110 pt]{2n.pdf}}.
\end{align}
\end{lemma}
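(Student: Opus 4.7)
The plan is to decompose the twist tangle on the left-hand side into fusion basis elements via the fusion equation (\ref{formula-ab}), then to use (\ref{formula-twist}) to collapse the $s$ half-twists into a scalar eigenvalue, and finally to simplify the remaining closed subdiagram using the tetrahedron reduction (\ref{formula-tetra}).

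Carrying this out, I would apply fusion with $a=b=n$ to the pair of parallel $n$-strands feeding into the twist region. Admissibility of $(n,n,c)$ forces $c$ to be even, say $c=2k$ with $0\le k\le n$, and this produces $\sum_k \tfrac{\langle 2k\rangle}{\langle n,n,2k\rangle}$ times the basis element in which the twisted pair is replaced by a single $2k$-colored arc flanked by two trivalent vertices. The trihedron formula evaluates to
\[
\langle 2k\rangle=[2k+1],\qquad \langle n,n,2k\rangle=(-1)^{n+k}\frac{[n+k+1]!\,([k]!)^2\,[n-k]!}{([n]!)^2\,[2k]!}.
\]
On the $c=2k$ summand the $s$ half-twists act as multiplication by $\delta(2k;n,n)^s$ by (\ref{formula-twist}), and combining (\ref{formula-delta}) with (\ref{formula-delta2}) for $s$ odd gives
\[
\delta(2k;n,n)^s=(-1)^k\,\frac{\mu_{2k}^{s/2}}{\mu_n^s},
\]
where $\mu_{2k}^{s/2}$ is interpreted formally as $A^{s(2k^2+2k)}$. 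The remaining closed diagram is exactly the tetrahedron with outer labels $2n,2n,n,n,n,n$ and middle label $2k$ from (\ref{formula-tetra}), so it reduces to $\tfrac{([k]!)^2}{[2k]!}$ times the single $2n$-strand. Multiplying the three contributions, the internal factor $([k]!)^2/[2k]!$ cancels with the matching factors from $\langle n,n,2k\rangle^{-1}$, and the signs combine as $(-1)^{n+k}\cdot(-1)^k=(-1)^n$, producing the claimed coefficient.

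The main obstacle I expect is the sign bookkeeping for the formal fractional power $\mu_{2k}^{s/2}$: a direct expansion of $\delta(2k;n,n)^s$ and of $\mu_{2k}^{s/2}/\mu_n^s$ yield the same $A$-exponent, but their signs differ by $(-1)^k$ precisely because $s$ is odd, and this discrepancy must be combined with the $(-1)^{n+k}$ from the trihedron in order to recover the stated $(-1)^n$. Once this single sign issue is settled, the rest of the computation is routine manipulation of quantum factorials.
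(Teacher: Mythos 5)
Your proposal is correct and follows essentially the same route as the paper: fusion into even colors $2k$, collapsing the $s$ half-twists via $\delta(2k;n,n)^s=(-1)^k\mu_{2k}^{s/2}/\mu_n^s$ (using that $s-1$ is even), and the tetrahedron reduction producing $([k]!)^2/[2k]!$, which cancels against the trihedron coefficient to leave $(-1)^{n+k}[2k+1]([n]!)^2/([n+k+1]![n-k]!)$. Your sign bookkeeping, including the $(-1)^k$ discrepancy between $\delta^s$ and the formal fractional power, matches the paper's computation exactly.
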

\begin{proof}
By using formulas (\ref{formula-ab}), (\ref{formula-twist}) and (\ref{formula-tetra}), we have 
\begin{align}
\raisebox{-40pt}{
\includegraphics[width=110 pt]{2n2n.pdf}}&=\sum_{k=0}^{n}\delta(2k,n,n)^{s}\frac{\langle 2k \rangle}{\langle n,n,2k\rangle}\raisebox{-45pt}{
\includegraphics[width=130 pt]{2n2nnnnn.pdf}}\\\nonumber&=\sum_{k=0}^{n}\delta(2k,n,n)^{s}\frac{\langle 2k \rangle}{\langle n,n,2k\rangle}\frac{([k]!)^2}{[2k]!}\raisebox{-35pt}{
\includegraphics[width=120 pt]{2n.pdf}}.
\end{align}

By formula (\ref{formula-delta}), we obtain
\begin{align}
    \delta(2k;n,n)=(-1)^{n-k}\mathfrak{q}^{k^2+k}q^{-\frac{n^2}{2}-n}=\frac{(-1)^k(\mathfrak{q}^{2k^2+2k})^{\frac{1}{2}}}{(-1)^n
    \mathfrak{q}^{\frac{n^2+2n}{2}}}=\frac{(-1)^k\mu_{2k}^{\frac{1}{2}}}{\mu_n}.
\end{align}
Since $s-1$ is even, by applying formula (\ref{formula-delta2}), we obtain
\begin{align}
    \delta(2k;n,n)^s=\delta(2k;n,n)^{s-1}\delta(2k;n,n)=
    \frac{\mu_{2k}^{\frac{s-1}{2}}}{\mu_n^{s-1}}\frac{(-1)^k\mu_{2k}^{\frac{1}{2}}}{\mu_n}=\frac{(-1)^k\mu_{2k}^{\frac{s}{2}}}{\mu_n^s}.
\end{align}

One the other hand, since 
\begin{align}
    \frac{\langle 2k \rangle}{\langle n,n,2k\rangle}\frac{([k]!)^2}{[2k]!}=(-1)^{n+k}\frac{[2k+1]([n]!)^2}{[n+k+1]![n-k]!}, 
\end{align}
combining the last two formulas together, we prove Lemma \ref{Lemma-2n2n}.  
\end{proof}

\begin{lemma} \label{lemma-Rkj}
We have the following formula 
\begin{subnumcases}
{\raisebox{-45pt}{
\includegraphics[width=120 pt]{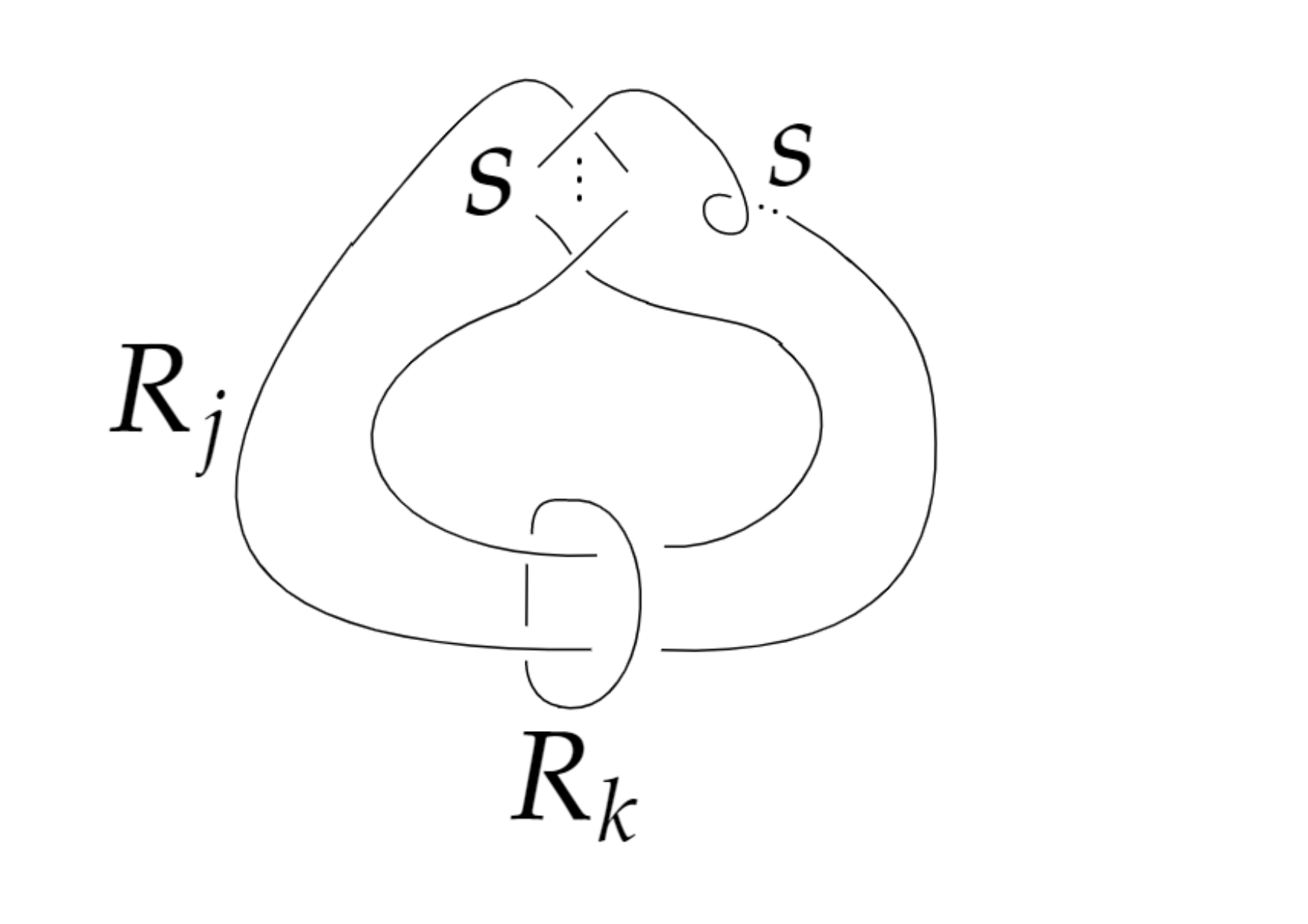}}=}
0, &$k\neq j$, \nonumber \\\nonumber
\frac{\{2j+1\}!(\{j\})^2}{\{1\}}\tilde{c}_{j,\frac{s}{2}}, & $k=j$.
\end{subnumcases}
\makeatletter\let\@alph\@@@alph\makeatother
where
\begin{align}
    \tilde{c}_{j,\frac{s}{2}}=\sum_{l=0}^j\frac{\mathfrak{q}^{l(l+1)s}\{2l+1\}}{\{j+l+1\}!\{j-l\}!}. 
\end{align}
\end{lemma}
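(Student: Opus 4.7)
The plan is to reduce the evaluation to Lemma \ref{Lemma-2n2n} by expanding both $R_k$ and $R_j$ in the eigenvector basis $\{e_i\}$ of the twist map via formula (\ref{formula-Rk}). Substituting $R_k=\sum_{i=0}^{k}t_{k,i}e_i$ and $R_j=\sum_{i'=0}^{j}t_{j,i'}e_{i'}$ turns the diagram into a double sum of bracket evaluations in which the two strands threading the $s$ half-twist region carry labels $i$ and $i'$. After fusing the two $e_i$- and $e_{i'}$-cables at the ends of the half-twist box and noting that the configuration forces $i=i'$ in any nonvanishing summand, the situation is exactly the one addressed in Lemma \ref{Lemma-2n2n} (with parameter $n=i$): the half-twist region becomes a sum over a fusion index $l$ with coefficient $(-1)^{i}\mu_{2l}^{s/2}\mu_{i}^{-s}[2l+1]([i]!)^2/([i+l+1]![i-l]!)$, contributing the key factor $\mathfrak{q}^{sl(l+1)}$ from $\mu_{2l}^{s/2}$ and the factor $\{2l+1\}/\{1\}$ from the quantum integer $[2l+1]$.

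After this, the two $R$-loops become $R_k$ and $R_j$ encircling $i$-cables that have been cut open by the fusion and attached to a central $2l$-colored edge. Applying the orthogonality property (\ref{formula-Rke2i}) together with $\langle R_k,e_{2i}\rangle=(-1)^k\{2k+1\}!/\{1\}$ when $i=k$, the $R_k$-pairing forces $i\geq k$ and then picks out only the term $i=k$ in the $t_{k,i}$ expansion; similarly the $R_j$-pairing forces $i\geq j$ and selects $i=j$. Consistency of both constraints gives $k=j$, which is the vanishing for $k\neq j$; when $k=j$, the constraint $i=k=j$ fixes the outer index, leaving only the sum over the fusion channel $l$ running from $0$ to $j$.

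For the diagonal case $k=j$, collecting the surviving prefactors — namely $t_{j,j}$ from the $R_j$ expansion combined with the $R_k$ pairing value $(-1)^k\{2k+1\}!/\{1\}$, the tetrahedron coefficient $([j]!)^2/[2j]!$ from (\ref{formula-tetra}), and the coefficient from Lemma \ref{Lemma-2n2n} — produces the explicit prefactor $\{2j+1\}!(\{j\})^2/\{1\}$, while the residual sum over $l$ becomes
\begin{align*}
\sum_{l=0}^{j}\frac{\mathfrak{q}^{sl(l+1)}\{2l+1\}}{\{j+l+1\}!\{j-l\}!},
\end{align*}
which is exactly $\tilde{c}_{j,s/2}$. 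The conversion from $[2l+1]$ and $[\cdot]!$ to $\{2l+1\}$ and $\{\cdot\}!$ absorbs the $\{1\}$-denominators cleanly because the total number of $\{1\}$ factors in numerator and denominator is balanced.

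The main obstacle I expect is the bookkeeping of signs and powers of $A$: the factor $(-1)^{i}\mu_{2l}^{s/2}\mu_i^{-s}$ from Lemma \ref{Lemma-2n2n} contains the sign $(-1)^{i}(-1)^{is}=(-1)^{2is}=1$ only after using that $s$ is odd, and must combine with the signs in $t_{k,i}$ and $s_{i,j}$ to eliminate the alternating sign $(-1)^l$ that is present in $c_{k,p}$ but absent from $\tilde{c}_{j,s/2}$. Carrying out this sign bookkeeping, together with verifying that the $\mu_i^{-s}$ factors cancel against the twist $t^p$ factors implicit elsewhere in the skein, is where the proof will require the most care; everything else is a direct consequence of the orthogonality of $R_k$ against $e_{2i}$ for $i<k$.
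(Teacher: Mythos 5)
Your toolkit is the right one --- the orthogonality (\ref{formula-Rke2i}) of $R_k$ against $e_{2i}$ for $i<k$, and Lemma \ref{Lemma-2n2n} for the half-twist region --- but the argument as written has two genuine structural gaps. First, you misread the geometry of the link: the $s$ half-twists sit between the \emph{two strands of one and the same component} (the knot-curve, which also pierces the other component's disk twice at the former $p$-twist region), while the other component is a flat circle not involved in the twist box. So the two strands entering the box automatically carry equal colors, and there is no second index $i'$ to be ``forced equal to $i$''; conversely, in the configuration you describe (one strand from each component) nothing forces $i=i'$ --- Lemma \ref{Lemma-2n2n} is stated only for two strands of equal color, and admissibility of the fusion channel only requires $|i-i'|\le 2l\le i+i'$ --- so that step is unjustified. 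Second, and more seriously, you expand \emph{both} $R_k$ and $R_j$ into the $e_i$ basis via (\ref{formula-Rk}) and then later still pair ``the $R_k$-loop'' and ``the $R_j$-loop'' against $e_{2i}$ using (\ref{formula-Rke2i}). Once both colors have been expanded there is no $R_k$ or $R_j$ left to pair; each skein element is being used twice. The paper's proof is organized precisely to avoid this: for $k\ne j$ it keeps $R_k$ intact and observes that the other component, piercing its spanning disk twice, is an even element of degree at most $2j<2k$ in $z$, so (\ref{formula-Rke2i}) kills it (and symmetrically when $j>k$); for $k=j$ it replaces only \emph{one} $R_j$ by its top term $e_j$ (the lower-degree remainder dying by the same orthogonality), extracts the framing factor $\mu_j^{s}$, applies Lemma \ref{Lemma-2n2n} with $n=j$ to the two $j$-colored strands, and only at the very end uses $\langle R_j,e_{2j}\rangle=(-1)^j\{2j+1\}!/\{1\}$ on the surviving, unexpanded $R_j$.

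Because of this confusion, the final ``collecting prefactors'' step is not actually a computation: the factors you propose to multiply ($t_{j,j}$, the pairing value, a tetrahedron coefficient $([j]!)^2/[2j]!$ --- which in any case should carry the fusion index $l$, not $j$, and is already absorbed into the coefficient of Lemma \ref{Lemma-2n2n}) come from two incompatible bookkeepings, so there is no reason the product should close up to $\{2j+1\}!(\{j\}!)^2\,\tilde c_{j,\frac{s}{2}}/\{1\}$. The sign issue you flag as the main obstacle is also not where the difficulty lies: in the correct setup the coefficient $\delta(2l;j,j)^{s}=(-1)^l\mu_{2l}^{s/2}/\mu_j^{s}$ combines with $\langle 2l\rangle/\langle j,j,2l\rangle\cdot([l]!)^2/[2l]!=(-1)^{j+l}[2l+1]([j]!)^2/([j+l+1]![j-l]!)$ so that the alternating sign cancels inside Lemma \ref{Lemma-2n2n} itself, and $\mu_{2l}^{s/2}=\mathfrak{q}^{l(l+1)s}$ supplies the exponent in $\tilde c_{j,\frac{s}{2}}$ with no $(-1)^l$ surviving. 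I would redo the proof in the paper's order: orthogonality alone for $k\ne j$; then for $k=j$ replace one $R_j$ by $e_j$, apply Lemma \ref{Lemma-2n2n}, and finish with a single application of $\langle R_j,e_{2j}\rangle$.
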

\begin{proof}
Since each component of the above link in Lemma \ref{lemma-Rkj} is a zero-framed unknot having a spanning disk pierced twice by the other component, by using formula (\ref{formula-Rke2i}), we obtain the statement for $k\neq j$. 

For $k=j$, using that $R_j-e_j$ has degree less than $j$, by Lemma \ref{Lemma-2n2n},
\begin{align}
&\raisebox{-45pt}{
\includegraphics[width=120 pt]{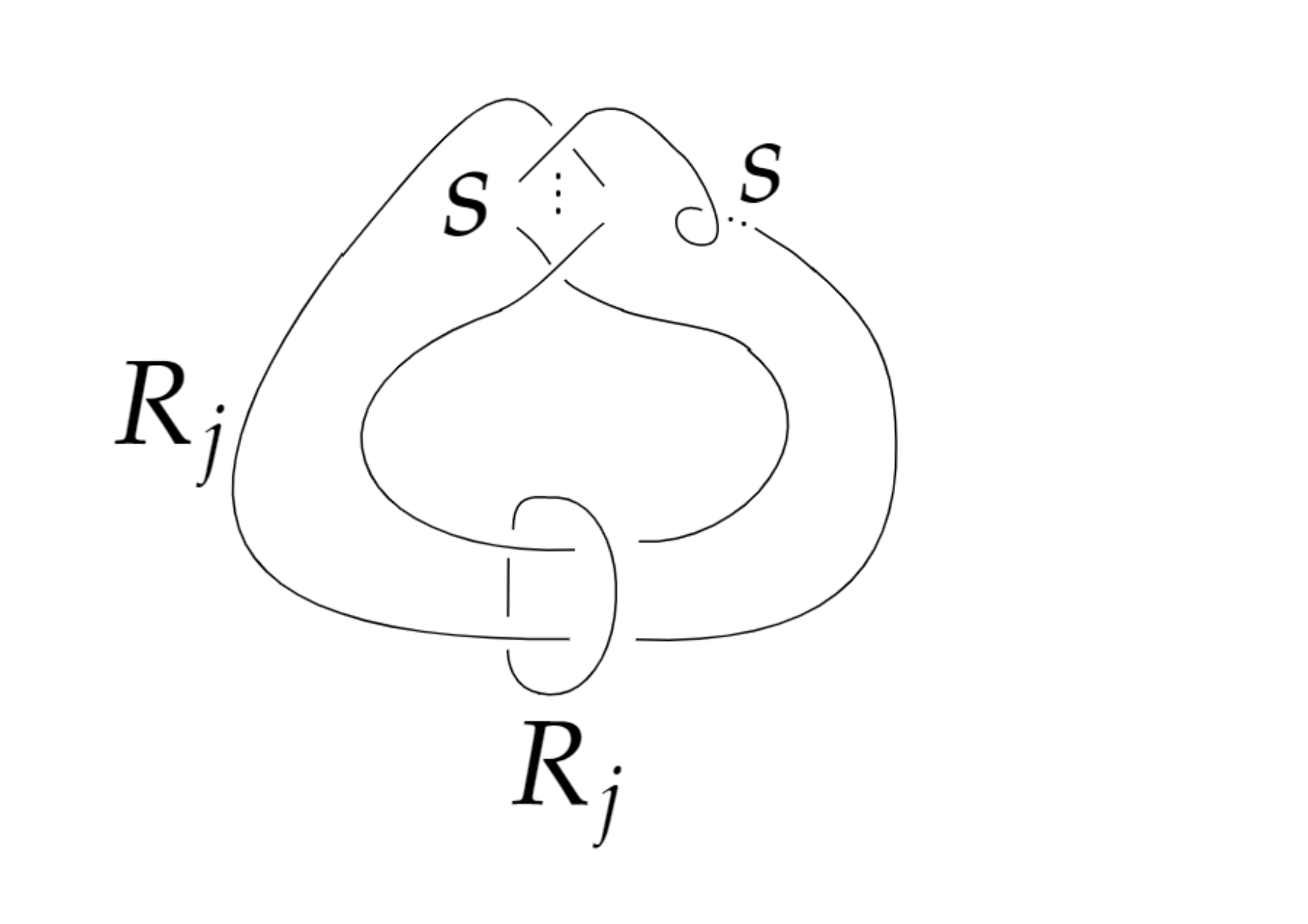}}=\raisebox{-45pt}{
\includegraphics[width=120 pt]{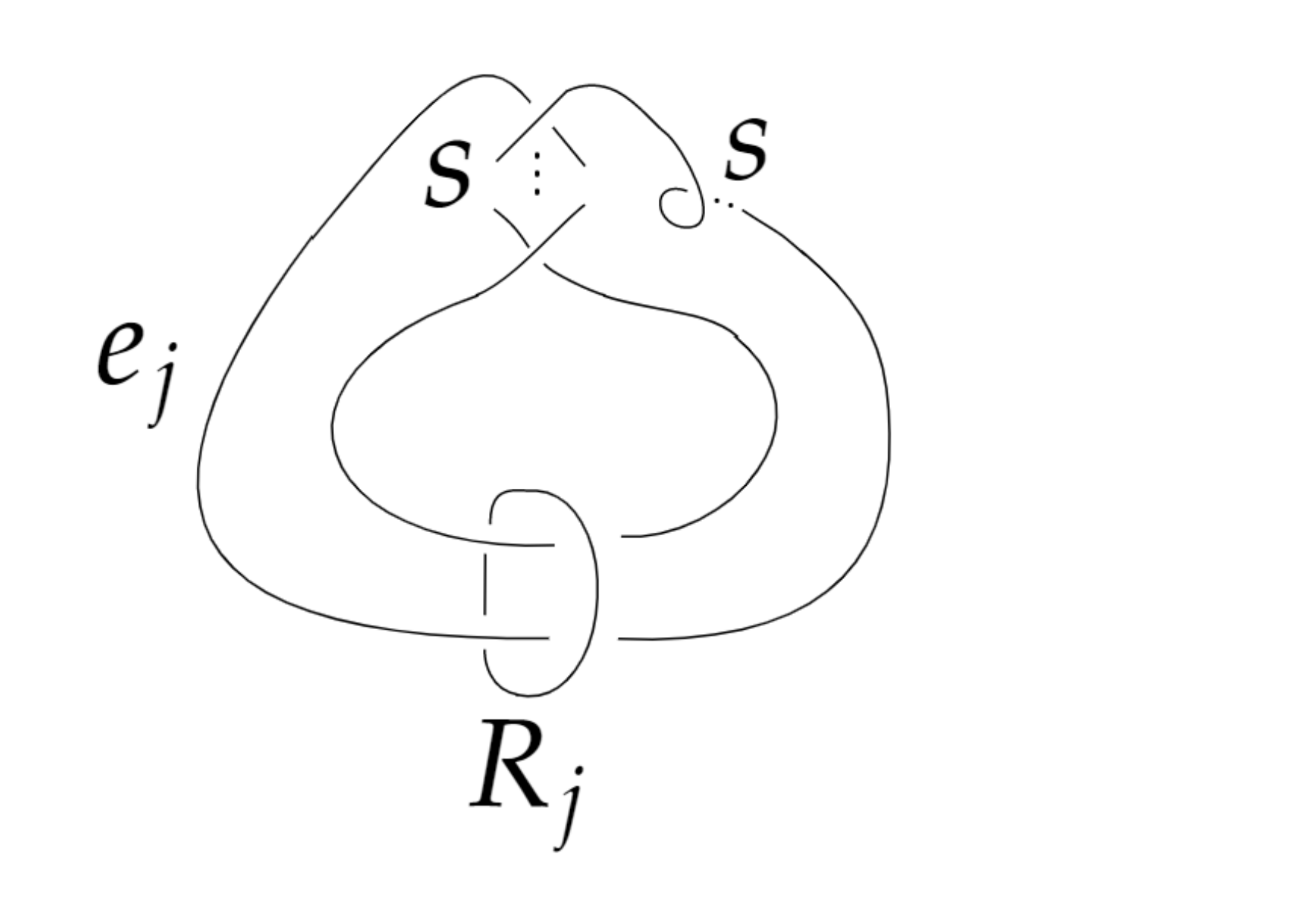}}=\mu_j^{s}\raisebox{-45pt}{
\includegraphics[width=120 pt]{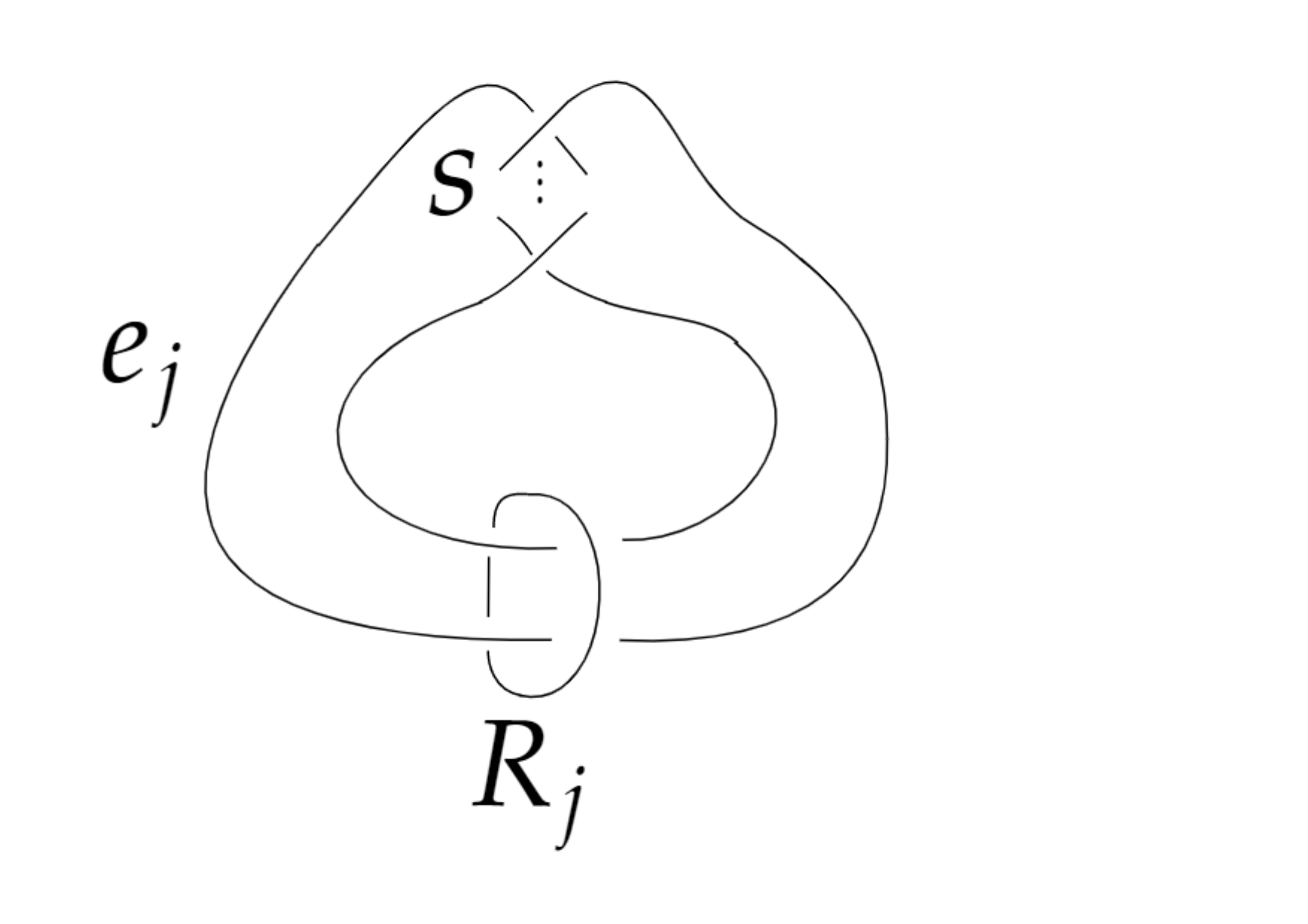}}\\\nonumber
&=\mu_j^{s}\raisebox{-45pt}{
\includegraphics[width=120 pt]{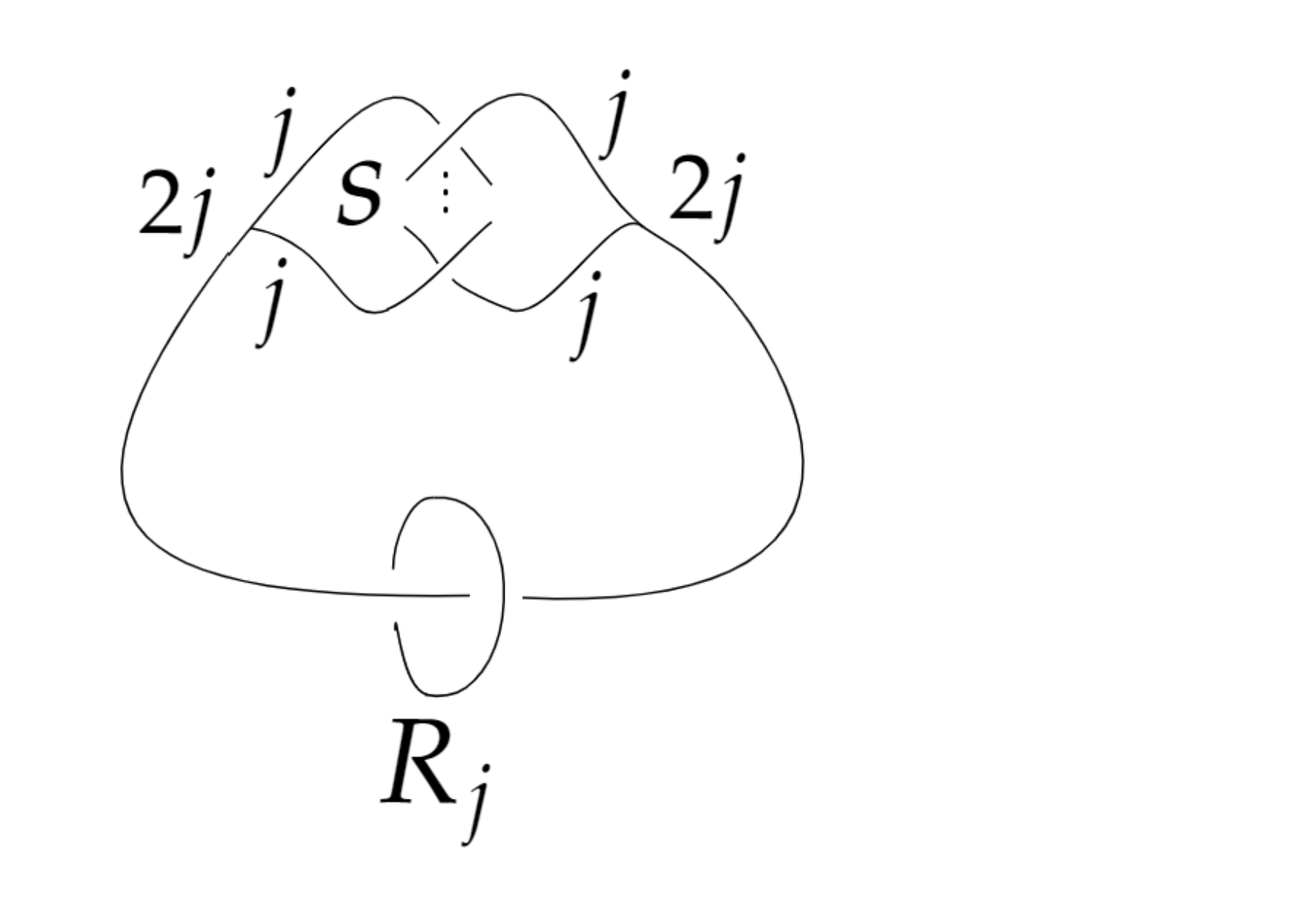}}=\sum_{l=0}^{j}(-1)^j\mu_{2l}^\frac{s}{2}\frac{[2l+1]([j]!)^2}{[j+l-1]![j-l]!}\raisebox{-45pt}{
\includegraphics[width=120 pt]{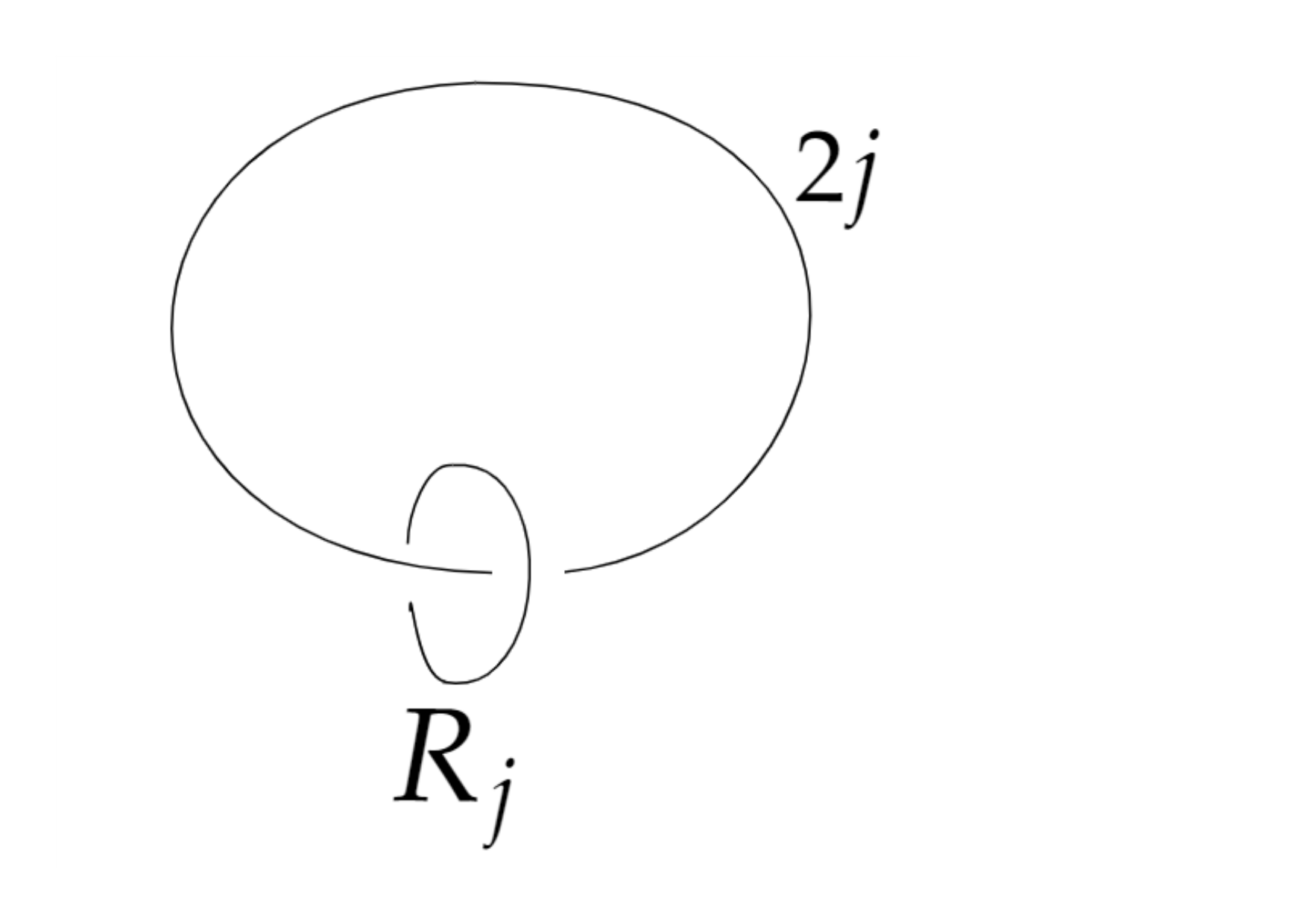}}\\\nonumber
&
=\frac{(\{2j+1\}!(\{j\}!)^2}{\{1\}}\sum_{l=0}^j\frac{\mathfrak{q}^{l(l+1)s}\{2l+1\}}{\{k+l+1\}!\{k-l\}!}.
\end{align}
We finish the proof of Lemma \ref{lemma-Rkj}. 
\end{proof}
For convenience, we introduce the notation 
\begin{align}
  \tilde{c}'_{j,\frac{s}{2}}=\{j\}!\tilde{c}_{j,\frac{s}{2}}.
\end{align}

Now, we can give the proof of our main theorem.

\begin{theorem}
The normalized $N$-th colored Jones polynomial of $\mathcal{K}_{p,\frac{s}{2}}$ is given by 
\begin{align} 
J'_{N}(\mathcal{K}_{p,\frac{s}{2}};\mathfrak{q})=\sum_{k=0}^{N-1}H_{k}(\mathcal{K}_{p,\frac{s}{2}};\mathfrak{q})\frac{\{N+k\}!}{\{N-1-k\}!\{N\}},
\end{align}
where
\begin{align} \label{formula-Hk}
H_{k}(\mathcal{K}_{p,\frac{s}{2}};\mathfrak{q})=(-1)^k\sum_{j=0}^kd_{k,j,p}c'_{j,p}
\tilde{c}'_{j,\frac{s}{2}},
\end{align}
with
\begin{align}
d_{k,j,p}&=\sum_{i=j}^k(-1)^{i+j}\mathfrak{q}^{-2pi(i+2)}\frac{\{2i+2\}\{i+1+j\}!}{\{k+i+2\}!\{k-i\}!\{i-j\}!},\label{formula-dkjp}\\
c'_{j,p}&=\{j\}!\sum_{l=0}^j\frac{(-1)^l\mathfrak{q}^{2pl(l+1)}\{2l+1\}}{\{j+l+1\}!\{j-l\}!}, \label{formula-c'jp}\\
\tilde{c}'_{j,\frac{s}{2}}&=\{j\}!\sum_{l=0}^j\frac{\mathfrak{q}^{sl(l+1)}\{2l+1\}}{\{j+l+1\}!\{j-l\}!}. \label{formula-tildecj}
\end{align}
\end{theorem}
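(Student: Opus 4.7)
The proof adapts the Kauffman-bracket skein method of Masbaum \cite{Mas03} and its extension to double twist knots \cite{Lau10} to handle the odd half-twist region, using the technical lemmas of this section. The starting point is
\[
J'_N(\mathcal{K}_{p,\frac{s}{2}};\mathfrak{q}) \;=\; \frac{\langle \mathcal{K}_{p,\frac{s}{2}}(e_{N-1}) \rangle}{\langle e_{N-1} \rangle},
\]
viewed as a Kauffman bracket of a cabled diagram containing two clasps (each formed by two parallel strands of color $N-1$): the $p$-full-twist clasp and the $s$-half-twist clasp.

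First I would treat the $p$-full-twist clasp exactly as in Masbaum's derivation of the twist knot formula. Specifically, encircle this clasp by an unknot carrying $\omega^p = \sum_{j} c_{j,p} R_j$ and expand in the $R_j$-basis, obtaining
\[
\langle \mathcal{K}_{p,\frac{s}{2}}(e_{N-1}) \rangle \;=\; \sum_{j\ge 0} c_{j,p}\,\langle D_j \rangle,
\]
where $D_j$ is the diagram obtained by replacing the $p$-full-twist region by $R_j$ encircling the remaining $(N-1)$-cable (which still contains the $s$-half-twist clasp). Then each $\langle D_j \rangle$ is evaluated by applying the fusion formula (\ref{formula-ab}), the half-twist identity (\ref{formula-twist}), and (\ref{formula-delta2}) to the $s$-half-twist clasp exactly as in the proof of Lemma \ref{Lemma-2n2n}, followed by the tetrahedron reduction (\ref{formula-tetra}) and the orthogonality (\ref{formula-Rke2i}) of $R_j$. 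By the same computation as in Lemma \ref{lemma-Rkj}, but with $R_j$ now linked to a cabled $e_{N-1}$ structure rather than to another $R_j$, the $s$-half-twist clasp contributes the factor $\tilde{c}'_{j,\frac{s}{2}}$ as in (\ref{formula-tildecj}).

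To convert the resulting expression into the cyclotomic form $\sum_k H_k \frac{\{N+k\}!}{\{N-1-k\}!\{N\}}$, I would use the basis changes $R_j = \sum_i t_{j,i} e_i$ of (\ref{formula-Rk}) and $e_i = \sum_m s_{i,m} R_m$ of (\ref{formula-ei}), together with the eigenvalue relation $t(e_i) = \mu_i e_i$ and the bracket $\langle e_i \rangle = (-1)^i[i+1]$. Collecting terms and matching the coefficient of $\frac{\{N+k\}!}{\{N-1-k\}!\{N\}}$ then yields formula (\ref{formula-Hk}): the outer sum over $j$ comes from the $\omega^p$-expansion, with the factor $c'_{j,p} = \{j\}!c_{j,p}$ of (\ref{formula-c'jp}) emerging after normalization; $\tilde{c}'_{j,\frac{s}{2}}$ arises from the $s$-half-twist clasp; and $d_{k,j,p}$ as in (\ref{formula-dkjp}) encodes the combination of basis-change coefficients $t_{k,i}$, $s_{i,j}$ and the residual framing factor $\mu_i^{?}$ that converts the intermediate $R_j$-basis expansion into the desired cyclotomic basis.

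The main obstacle is verifying the precise form of $d_{k,j,p}$, in particular the unusual power $\mathfrak{q}^{-2pi(i+2)} = \mu_i^{-4p}$. Naively one expects only $\mu_{2i}^{\pm p} = \mathfrak{q}^{\pm 2pi(i+1)}$ from a $p$-full-twist region acting on a strand of color $2i$, by analogy with the even double twist case (\ref{formula-JKps}). The appearance of $\mu_i^{-4p}$ instead reflects a non-trivial interaction between the $\omega^p$ encircling of the $p$-full-twist region and the direct fusion of the $s$-half-twist region, which forces an extra framing correction on the internal color $i$ before the two sides can be glued. Establishing this precisely reduces to a careful combinatorial identity involving $t_{k,i}$, $s_{i,j}$ and the eigenvalues $\mu_i^p$, and it is the technical crux that makes the odd half-twist case substantively different from the even case.
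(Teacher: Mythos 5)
Your overall skeleton --- expand in the $R$-basis, use $\omega^p$ for the full-twist clasp, and fuse the $s$ half-twists as in Lemmas \ref{Lemma-2n2n} and \ref{lemma-Rkj} --- matches the paper's strategy, but there is a genuine gap exactly at the point you flag as the ``technical crux'': the coefficient $d_{k,j,p}$ is never derived, and your guess about where the factor $\mathfrak{q}^{-2pi(i+2)}$ comes from is off the mark. There is no mysterious ``non-trivial interaction'' between the $\omega^p$ circle and the fusion of the half-twist region. The factor is simply $\mu_i^{-4p}$, arising from a framing correction: when the $2p$-crossing clasp (a region of $p$ full twists between two parallel strands of the \emph{same} component) is replaced by a meridian colored $\omega^p$, the resulting presentation of $\mathcal{K}_{p,\frac{s}{2}}$ is no longer zero-framed, and one must insert $-4p$ compensating twists on the knot component --- the same correction that produces the global factor $\mathfrak{q}^{-2p(N^2-1)}=\mu_{N-1}^{-4p}$ in the appendix formula. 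In the paper's proof this correction is applied after expanding $e_{N-1}=\sum_k(-1)^{N-1-k}\left[\begin{smallmatrix}N+k\\ N-1-k\end{smallmatrix}\right]R_k$: one computes $t^{-4p}(R_k)=\sum_j d^{(-4p)}_{k,j}R_j$ via Lemma \ref{lemma-djk}, i.e.\ $d^{(-4p)}_{k,j}=\sum_i t_{k,i}\,\mu_i^{-4p}\,s_{i,j}$, and $\mu_i^{-4p}=\mathfrak{q}^{-2pi(i+2)}$ is exactly the power you could not account for; after normalization, $d^{(-4p)}_{k,j}=\frac{\{2k+1\}!}{\{2j+1\}!}\,d_{k,j,p}$.

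Once this identification is in place, the rest assembles essentially as you describe, though in the opposite order from your proposal: the $\omega^p$ meridian paired with $R_j$ picks out $c_{j,p}$ by the orthogonality (\ref{formula-Rke2i}), and the remaining diagram is two $R_j$-colored components joined by the $s$ half-twists, which Lemma \ref{lemma-Rkj} evaluates as $\frac{\{2j+1\}!(\{j\}!)^2}{\{1\}}\tilde{c}_{j,\frac{s}{2}}$; your description of this last step, with ``$R_j$ linked to a cabled $e_{N-1}$ structure,'' is not what actually occurs. Without recognizing $d_{k,j,p}$ as (a renormalization of) the matrix of the framing twist $t^{-4p}$ in the $R$-basis, the proposal does not establish formula (\ref{formula-Hk}).
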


\begin{proof}
First, note that the framing zero presentation of the knot $\mathcal{K}_{p,\frac{s}{2}}$ is given by 
\begin{align*}
\raisebox{-45pt}{
\includegraphics[width=120 pt]{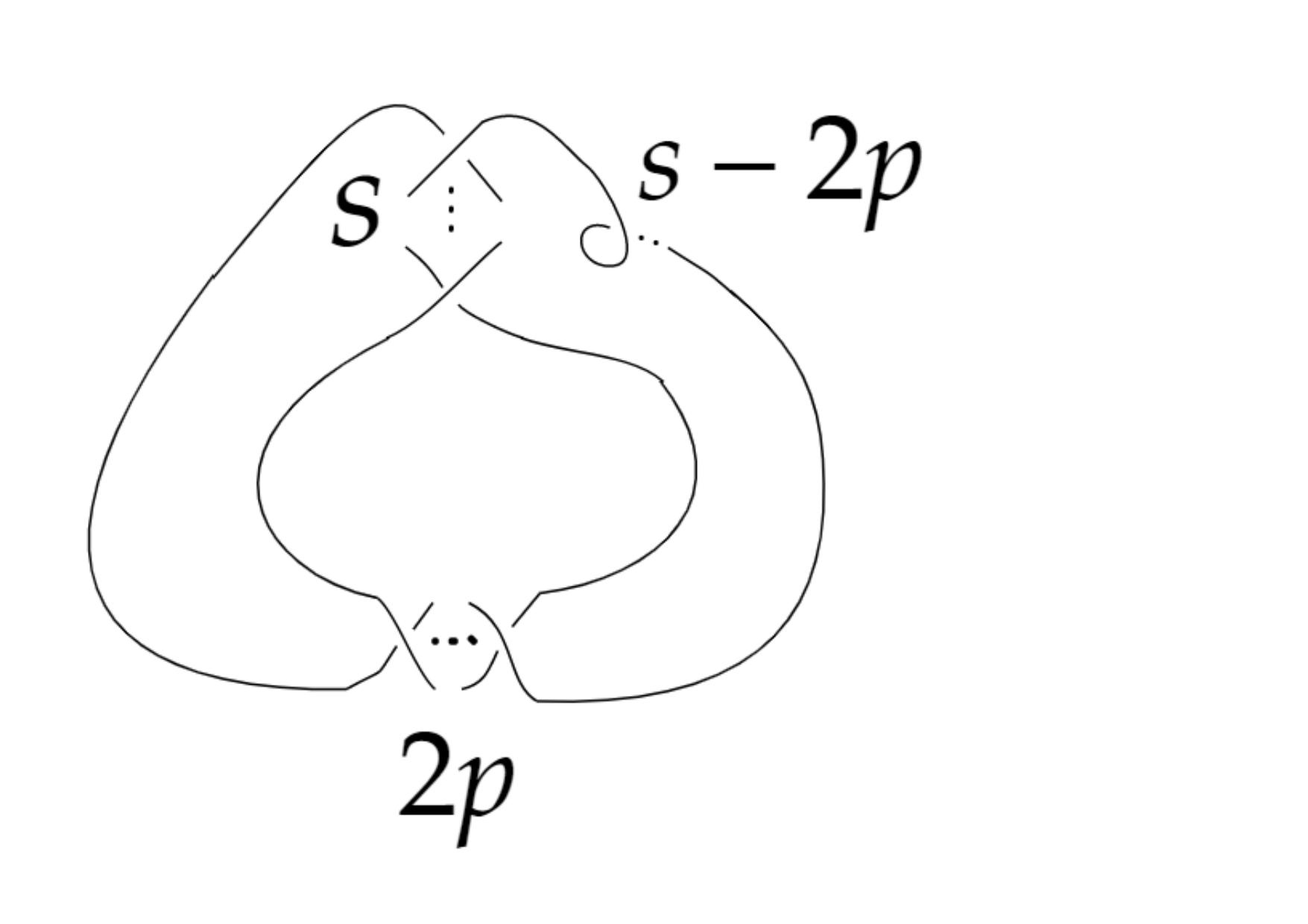}}= \raisebox{-45pt}{
\includegraphics[width=120 pt]{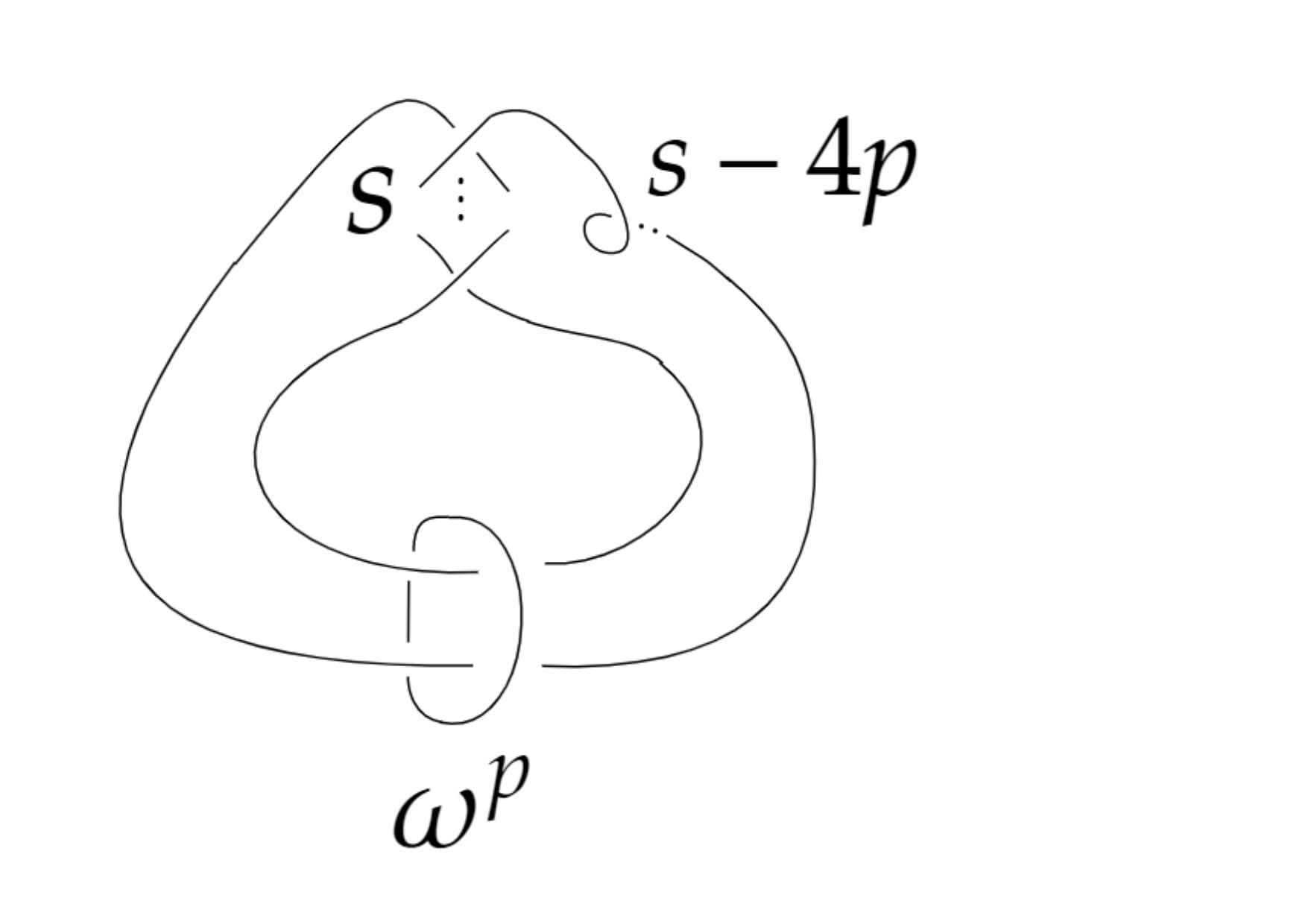}}.  
\end{align*}

Then, the colored Jones polynomial of $\mathcal{K}_{p,\frac{s}{2}}$ can be computed by the following diagram calculations.

\begin{align*}
J_{N}(\mathcal{K}_{p,\frac{s}{2}};\mathfrak{q})&=\raisebox{-45pt}{
\includegraphics[width=120 pt]{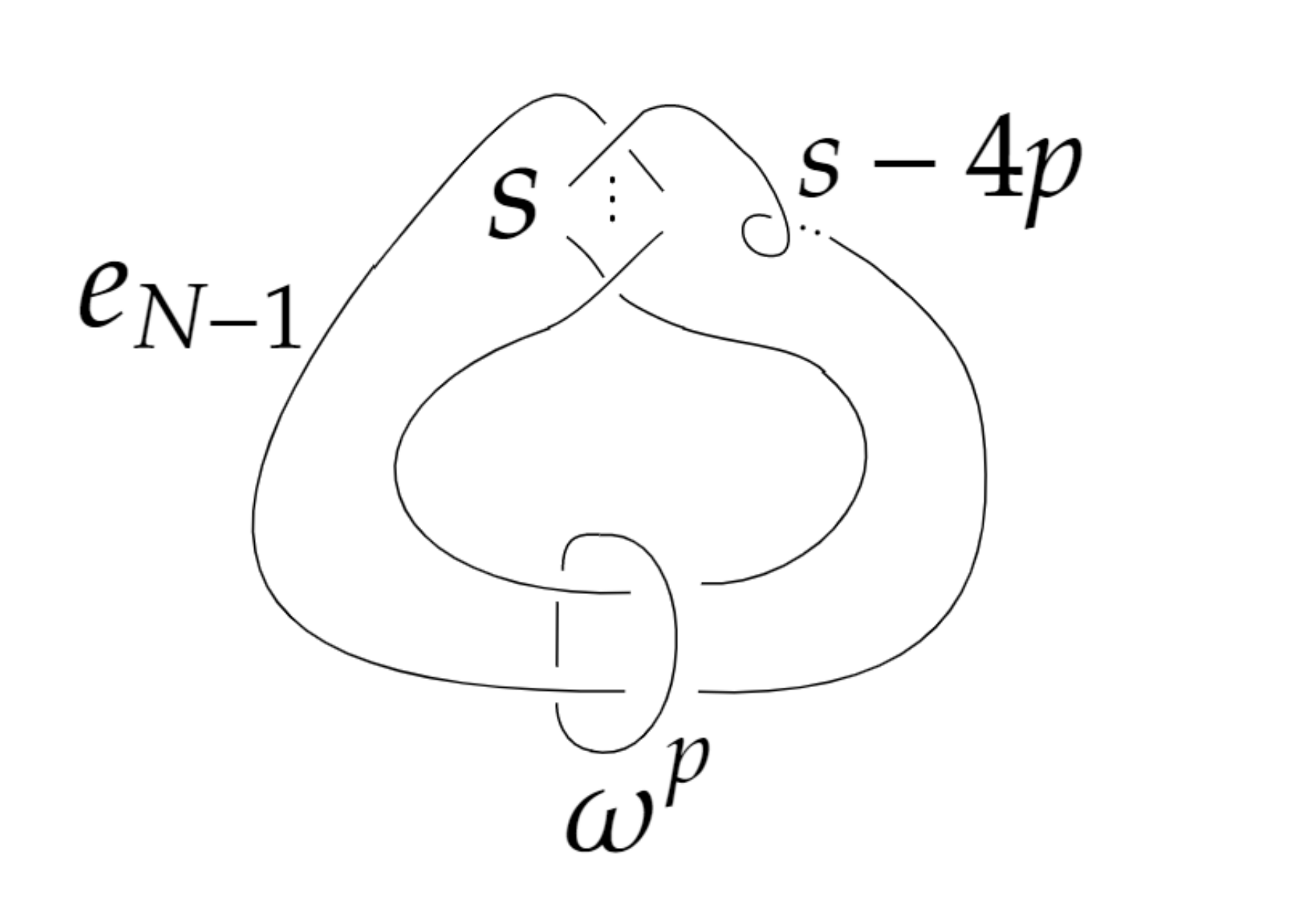}}\\
&=\sum_{k=0}^{N-1}
(-1)^{N-1-k}\left[
\begin{array}{@{\,}c@{\,}}N+k \\ N-1-k \end{array} \right]  \raisebox{-45pt}{\includegraphics[width=120
pt]{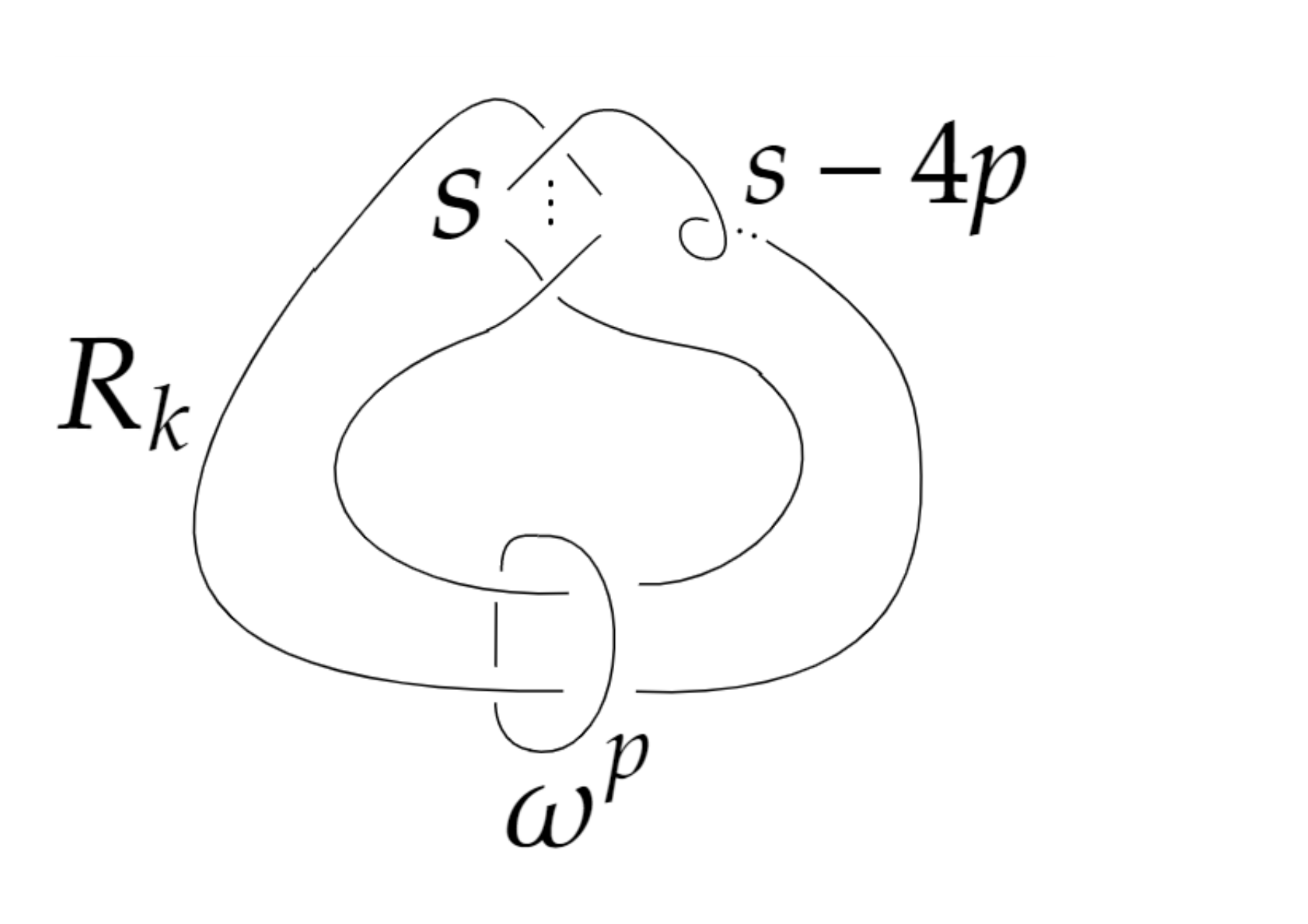}}\\ &=\sum_{k=0}^{N-1}
(-1)^{N-1-k}\left[
\begin{array}{@{\,}c@{\,}}N+k \\ N-1-k \end{array} \right]\sum_{j=0}^kd_{k,j}^{(-4p)}  \raisebox{-45pt}{\includegraphics[width=120
pt]{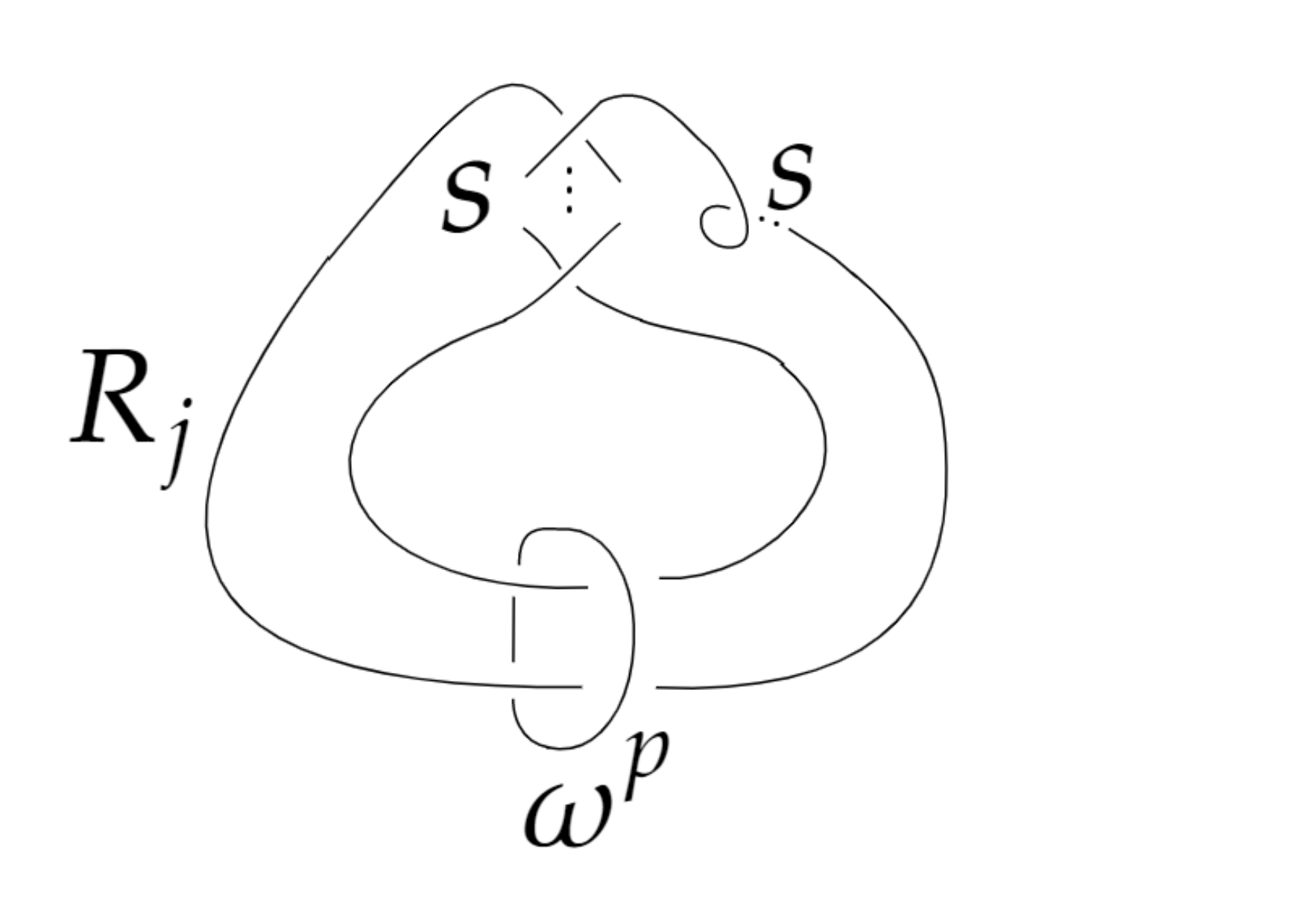}}\\ &=\sum_{k=0}^{N-1}
(-1)^{N-1-k}\left[
\begin{array}{@{\,}c@{\,}}N+k \\ N-1-k \end{array} \right]\sum_{j=0}^kd_{k,j}^{(-4p)}c_{j,p}  \raisebox{-45pt}{\includegraphics[width=120
pt]{RjRj.pdf}}
\end{align*}
By using Lemmas \ref{lemma-djk} and \ref{lemma-Rkj}, we obtain

\begin{align*}
&\sum_{j=0}^kd_{k,j}^{(-4p)}c_{j,p}  \raisebox{-45pt}{\includegraphics[width=120
pt]{RjRj.pdf}}\\\nonumber
&=\sum_{j=0}^k\sum_{i=j}^k(-1)^{i+j}\mathfrak{q}^{-2p(i+2)i}\frac{\{2k+1\}!\{2i+2\}!\{i+1+j\}!(\{j\}!)^2}{\{k+i+2\}!\{k-i\}!\{i-j\}!\{1\}}c_{j,p}
\tilde{c}_{j,\frac{s}{2}}.
\end{align*}

Therefore, we have
\begin{align}
    J'_{N}(\mathcal{K}_{p,\frac{s}{2}};\mathfrak{q})&=\frac{(-1)^{N-1}\{1\}}{\{N\}}J_{N}(\mathcal{K}_{p,\frac{s}{2}};\mathfrak{q}) \\\nonumber
    &=\sum_{k=0}^{N-1}H_{k}(\mathcal{K}_{p,\frac{s}{2}};\mathfrak{q})\frac{\{N+k\}!}{\{N-1-k\}!\{N\}},
\end{align}
with 
\begin{align}
H_{k}(\mathcal{K}_{p,\frac{s}{2}};\mathfrak{q})=(-1)^k\sum_{j=0}^k\sum_{i=j}^k(-1)^{i+j}\mathfrak{q}^{-2pi(i+2)}\frac{\{2i+2\}\{i+1+j\}!(\{j\}!)^2}{\{k+i+2\}!\{k-i\}!\{i-j\}!}c_{j,p}
\tilde{c}_{j,\frac{s}{2}}.
\end{align}
\end{proof}

\section{Multiple sum expressions} \label{Section-Bailey}
Although Habiro's theorem arsserts that the coefficient  $H_{k}(\mathcal{K}_{p,\frac{s}{2}};\mathfrak{q})$ in the formula (\ref{formula-JN}) lies in the integral coefficient ring $\mathbb{Z}[\mathfrak{q}^{\pm 1}]$. It is not obvious to see this fact from the expression (\ref{formula-Hk0})  directly. For the double twist knot $\mathcal{K}_{p,r}$, by using the Kauffman bracket skein theory, Masbaum \cite{Mas03} obtained an multi-sum expression for  $c'_{k,p}$ (see also \cite{Hab08}) which implies that the coefficient $H_k(\mathcal{K}_{p,r};\mathfrak{q})$ lies in the ring $\mathbb{Z}[\mathfrak{q}^{\pm 1}]$.      

Actually, there is another technique named  Bailley chains \cite{And84} which can be used to derive Masbaum's multi-sum expression for $c'_{k,p}$ directly. In this section, we will first review the Bailey chains technique, and then derive the multi-sum expression for $c'_{k,p}, \tilde{c}'_{k,\frac{s}{2}}$ and $d_{k,j,p}$ respectively. Hence, we prove Theorem \ref{theorem-2}.

\subsection{Bailey chains}
We recall some basics of Bailey chains as illustrated in the appendix of \cite{NRZ12}. The details and proofs can be found in \cite{And84}. A pair of sequences $(a_k,b_k)_{k\geq 0}$ is called a Bailey pair if they are related by  
\begin{align} \label{formula-bn}
    b_k=\sum_{j=0}^k\frac{a_j}{(q;q)_{k-j}(xq;q)_{k+j}}
\end{align}
or equivalently 
\begin{align}
    a_k=\frac{1-xq^{2k}}{1-x}\sum_{j=0}^{k-j}q^{(k-j)(k-j-1)/2}\frac{(x;q)_{k+j}}{(q;q)_{k-j}}b_j.
\end{align}

Given a Bailey pair $(a_k,b_k)_{k\geq 0}$, one can construct a new Bailley pair $(a'_k,b'_k)_{k\geq 0}$ by 
\begin{align}
    a'_k=x^kq^{k^2}a_k, \ b'_k=\sum_{j=0}^k\frac{x^jq^{j^2}}{(q;q)}b_j, \ k=0,1,2,3,...
\end{align}
A sequence $(\alpha^{(1)},\beta^{(1)})\rightarrow (\alpha^{(2)},\beta^{(2)})\rightarrow \cdots \rightarrow (\alpha^{(l)},\beta^{(l)})\rightarrow \cdots$ obtained by applying iteratively is called a Bailey Chain. The key observation is that two Bailey pairs next to each other in a Bailey chain, 
$(\alpha_n^{(l-1)},\beta_n^{(l-1)})_{n\geq 0}$ and $(\alpha_n^{(l)},\beta_n^{(l)})_{n\geq 0}$, satisfy the special version of the Bailey's lemma
\begin{align} \label{formula-apidentity}
    \sum_{j=0}^k\frac{\alpha_j^{(l)}}{(q;q)_{k-j}(xq;q)_{k+j}}=\sum_{j=0}^k\frac{x^jq^{j^2}}{(q;q)_{k-j}}\sum_{k=0}^j\frac{\alpha_k^{(l-1)}}{(q;q)_{j-k}(xq;q)_{j+k}}.
\end{align}

Using the formula (\ref{formula-apidentity}) recursively, we obtain 
\begin{align} \label{formula-alphap}
    &(q;q)_{k_p}\sum_{k_{p-1}=0}^{k_p}\frac{\alpha_{k_{p-1}}^{(p)}}{(q;q)_{k_p-k_{p-1}}(xq;q)_{k_p+k_{p-1}}}\\\nonumber
    &=(q;q)_{k_p}\sum_{k_{p-1}}^{k_p}\frac{x^{k_{p-1}}q^{k_{p-1}^2}}{(q;q)_{k_p-k_{p-1}}}\sum_{k_{p-2}=0}^{k_{p-1}}\frac{a_{k_{p-2}}^{(p-1)}}{(q;q)_{k_{p-1}-k_{p-2}}(xq;q)_{k_{p-1}+k_{p-2}}}\\\nonumber
    &=\cdots\\\nonumber
    &=(q;q)_{k_p}\sum_{k_p\geq \cdots\geq k_1\geq 0}\prod_{i=1}^{p-1}x^{k_i}q^{k_i^2}\frac{1}{(q;q)_{k_p-k_{p-1}}\cdots (q;q)_{k_2-k_1}}\sum_{k_0=0}^{k_1}\frac{\alpha^{(1)}_{k_0}}{(q;q)_{k_1-k_0}(xq;q)_{k_1+k_0}}\\\nonumber
\end{align}

In particular, taking $x=q$ in (\ref{formula-alphap}) and by 
\begin{align}
    \alpha_{l}^{(p)}=q^{(l^2+l)(p-1)}\alpha^{(1)}_{l}
\end{align}
we obtain
\begin{align}   \label{formla-alpha1}
    &(q;q)_{k}\sum_{l=0}^{k}\frac{q^{(l^2+l)(p-1)}\alpha^{(1)}_{l}(1-q)}{(q;q)_{k-l}(q;q)_{k+l+1}}\\\nonumber
    &=\sum_{k=k_p\geq \cdots\geq k_1\geq 0}\prod_{i=1}^{p-1}q^{k_i^2+k_i}\frac{(q;q)_{k}}{(q;q)_{k_p-k_{p-1}}\cdots (q;q)_{k_2-k_1}}\sum_{k_0=0}^{k_1}\frac{\alpha^{(1)}_{k_0}}{(q;q)_{k_1-k_0}(q;q)_{k_1+k_0+1}}\\\nonumber
\end{align}

\subsection{Multiple sum expression for the coefficient $c_{k,p}$}
Now we consider the Bailey pair \cite{And84} given by 
\begin{align}  \label{formula-alpha1k}
    \alpha^{(1)}_k=(-1)^kq^{k^2+k+\frac{k(k-1)}{2}}\frac{1-q^{2k+1}}{1-q},  \ \beta^{(1)}_k=\frac{1}{(q;q)_k}. 
\end{align}
By definition, $(\alpha^{(1)}_k,\beta^{(1)}_k)$ satisfies the identity (\ref{formula-bn}), which gives
\begin{align}  \label{formula-alpha1l}
\sum_{l=0}^k\frac{\alpha^{(1)}_{l}}{(q;q)_{k-l}(q;q)_{k+l+1}}=\frac{1}{(q;q)_k}.
\end{align}

Substituting  (\ref{formula-alpha1k}) and (\ref{formula-alpha1l}) to formula (\ref{formla-alpha1}), we obtain
\begin{align}
    (q;q)_{k}\sum_{l=0}^{k}\frac{(-1)^lq^{l(l+1)p+\frac{l(l-1)}{2}}(1-q^{2l+1})}{(q;q)_{k-l}(q;q)_{k+l+1}}
    =\sum_{k=k_p\geq \cdots \geq k_1\geq 0}\prod_{i=1}^{p-1}q^{k_i^{2}+k_i}\left[
\begin{array}{@{\,}c@{\,}}k_{i+1} \\ k_i \end{array} \right]_q.
\end{align}

By using the formula (\ref{formula-kckp}) and $q=\mathfrak{q}^2$,  one gets 
\begin{align}
c'_{k,p}=(-1)^kq^{\frac{k(k+3)}{4}}\sum_{k=k_p\geq \cdots \geq k_1\geq 0}\prod_{i=1}^{p-1}q^{k_i^{2}+k_i}\left[
\begin{array}{@{\,}c@{\,}}k_{i+1} \\ k_i \end{array} \right]_q\in \mathbb{Z}[\mathfrak{q}^{\pm 1}].    
\end{align}

\subsection{Multiple sum expression for the coefficient $c_{k,\frac{s}{2}}$}
Since $s$ is odd, we can write $s=2m-1$, 
By the expression (\ref{formula-tildecj}) for $\tilde{c}_{k,\frac{s}{2}}$, we have
\begin{align}
    \tilde{c}'_{k,\frac{2m-1}{2}}=(-1)^kq^{\frac{k^2+3k}{4}}\sum_{l=0}^{k}q^{ml^2+(m-1)l}\frac{(1-q^{2l+1})(q;q)_k}{(q;q)_{k+l+1}(q;q)_{k-l}}.
\end{align}

In this case, we start with the following Bailey pair (see formula (4.12) in \cite{War03})
\begin{align}
    \alpha^{(1)}_k=\frac{q^{k^2}(1-q^{2k+1})}{1-q}, \ \beta^{(1)}_k=\frac{1}{(q;q)^2_k}.
\end{align}

Substituting it to  formula (\ref{formla-alpha1}),  we obtain 
\begin{align}
    \sum_{l=0}^{k}q^{ml^2+(m-1)l}\frac{(1-q^{2l+1})(q;q)_k}{(q;q)_{k+l+1}(q;q)_{k-l}}=\sum_{k=k_m\geq \cdots\geq k_1\geq 0 }\frac{1}{(q;q)_{k_1}}\prod_{i=1}^{m-1}q^{k_i^2+k_i}\left[
\begin{array}{@{\,}c@{\,}}k_{i+1} \\ k_i \end{array} \right]_q.
\end{align}

Hence
\begin{align}
    \tilde{c}'_{k,\frac{2m-1}{2}}\in \frac{1}{\{k\}!}\mathbb{Z}[\mathfrak{q}^{\pm 1}]. 
\end{align}

\subsection{Multiple sum expression for the coefficient $d_{k,j}$}
For $0\leq j\leq k$, 
\begin{align}
    d_{k,j,p}=(-1)^j\sum_{i=j}^k(-1)^i\mathfrak{q}^{-2pi(i+2)}\frac{\{2i+2\}\{i+1+j\}\{i+j\}\cdots\{i+1-j\}}{\{k+i+2\}!\{k-i\}!}.
\end{align}

Introduce the index $l$ by $i=l+j$, the formula  can be  written as  
\begin{align}
    d_{k,j,p}=\sum_{l=0}^{k-j}(-1)^l\mathfrak{q}^{-2p(l+j)(l+j+2)}\frac{\{2l+2j+2\}\{l+2j+1\}\{l+2j\}\cdots\{l+1\}}{\{k+l+j+2\}!\{k-l-j\}!}.
\end{align}

Using the $q$-Pochhammer symbols, we obtain 
\begin{align} \label{formula-dkjp2}
     d_{k,j,p}=q^{\frac{k(k+3)-j(j+3)}{2}-pj(j+2)}\sum_{l=0}^{k-j}(-1)^lq^{-p(l^2+(2j+2)l)+\frac{l(l-1)}{2}}\frac{(1-q^{2(l+j+1)})(q;q)_{l+2j+1}}{(q;q)_{k+l+j+2}(q;q)_{k-l-j}(q;q)_{l}}.
\end{align}

Taking $x=q^{2j+2}$, replacing $k$ by $(k-j)$ in (\ref{formula-alphap}), and using
\begin{align}
    \alpha_{l}^{(p)}=x^lq^{l^2}=q^{(l^2+(2j+2)l)(p-1)}\alpha^{(1)}_{l},
\end{align}
we obtain
\begin{align}   \label{formla-alpha1new}
    &(q;q)_{k-j}\sum_{l=0}^{k-j}\frac{q^{(l^2+(2j+2)l)(p-1)}\alpha^{(1)}_{l}}{(q;q)_{k-j-l}(q^{2j+3};q)_{k-j+l}}\\\nonumber
    &=\sum_{k-j=k_p\geq \cdots\geq k_1\geq 0}\prod_{i=1}^{p-1}q^{k_i^2+k_i}\frac{(q;q)_{k-j}}{(q;q)_{k_p-k_{p-1}}\cdots (q;q)_{k_2-k_1}}\sum_{k_0=0}^{k_1}\frac{\alpha^{(1)}_{k_0}}{(q;q)_{k_1-k_0}(q^{2j+3};q)_{k_1+k_0}}\\\nonumber
\end{align}

We consider the following Bailey pair \cite{And84},  
\begin{align}
    \alpha^{(1)}_l&=(-1)^lx^lq^{l^2+\frac{l(l-1)}{2}}\frac{1-xq^{2l}}{1-x}\frac{(x;q)_{l}}{(q;q)_l}  \ \ (\text{Taking $x=q^{2j+2}$})  \\\nonumber
    &=(-1)^lq^{l^2+(2j+2)l+\frac{l(l-1)}{2}}\frac{(1-q^{2l+2j+2})(q;q)_{2j+l+1}}{(q;q)_l(q;q)_{2j+2}} \\\nonumber
\beta^{(1)}_l&=\frac{1}{(q;q)_l}.
\end{align}

Formula (\ref{formla-alpha1}) gives
\begin{align}  
\sum_{k_0=0}^{k_1}\frac{\alpha^{(1)}_{k_0}}{(q;q)_{k_1-k_0}(q^{2j+3};q)_{k_1+k_0}}=\beta^{(1)}_{k_1}=\frac{1}{(q;q)_{k_1}}.
\end{align}
Substituting them back to formula (\ref{formla-alpha1new}), we obtain
\begin{align}  
    &(q;q)_{k-j}\sum_{l=0}^{k-j}(-1)^l\frac{q^{p(l^2+(2j+2)l)+\frac{l(l-1)}{2}}}{(q;q)_{k-j-l}(q^{2j+3};q)_{k-j+l}}\frac{(1-q^{2l+2j+2})(q;q)_{2j+l+1}}{(q;q)_l(q;q)_{2j+2}}\\\nonumber
    &=\sum_{k-j=k_p\geq \cdots\geq k_1\geq 0}\prod_{i=1}^{p-1}q^{k_i^2+k_i}\frac{(q;q)_{k-j}}{(q;q)_{k_p-k_{p-1}}\cdots (q;q)_{k_2-k_1}}\sum_{k_0=0}^{k_1}\frac{\alpha^{(1)}_{k_0}}{(q;q)_{k_1-k_0}(q^{2j+3};q)_{k_1+k_0+1}}\\\nonumber
    &=\sum_{k-j=k_p\geq \cdots\geq k_1\geq 0 }\prod_{i=1}^{p-1}q^{k_i^2+k_i}\left[
\begin{array}{@{\,}c@{\,}}k_{i+1} \\ k_i \end{array} \right]_q.
\end{align}

Comparing to the formula (\ref{formula-dkjp2}), for $p>0$,  we obtain
\begin{align}
d_{k,j,-p}=q^{\frac{k(k+3)-j(j+3)}{2}+pj(j+2)}\frac{1}{(q;q)_{k-j}}\sum_{k-j=k_p\geq \cdots\geq k_1\geq 0 }\prod_{i=1}^{p-1}q^{k_i^2+k_i}\left[
\begin{array}{@{\,}c@{\,}}k_{i+1} \\ k_i \end{array} \right]_q.
\end{align}
Since 
\begin{align}
    (1/q;1/q)_k=(-1)^kq^{-\frac{k(k+1)}{2}}, \ \ \left[ \begin{array}{@{\,}c@{\,}}k \\ l \end{array} \right]_{1/q}=q^{l^2-lk}\left[ \begin{array}{@{\,}c@{\,}}k \\ l \end{array} \right]_{q},
\end{align}
we get
\begin{align}
    d_{k,j,p}(q)&=d_{k,j,-p}(1/q)\\\nonumber
    &=(-1)^{k-j}q^{(j+1)(j-k)-pj(j+2)}\frac{1}{(q;q)_{k-j}}\sum_{k-j=k_p\geq \cdots\geq k_1\geq 0 }\prod_{i=1}^{p-1}q^{-k_ik_{i+1}-k_i}\left[
\begin{array}{@{\,}c@{\,}}k_{i+1} \\ k_i \end{array} \right]_q.
\end{align}

\section{Appendix: corrected form of Walsh's formula}
K. Walsh first computed an formula for the colored Jones polynomial of $\mathcal{K}_{p,\frac{s}{2}}$ which is not in the cyclotomic expansion form (cf. Corollary 4.2.4 in \cite{Wal14}). But the original version of Walsh's formula (i.e. Corollary 4.2.4 in \cite{Wal14}) is not right due to the incorrect using of framing change. 

Although the corrected form of Walsh's formula has been given in \cite{LO19}( cf. formula (5.13) in \cite{LO19}), for the completeness, we provide the derivation of this corrected formula in this appendix.
\begin{theorem}
The (normalized) colored Jones polynomial for double twist knot
$\mathcal{K}_{p,\frac{s}{2}}$ is given by
\begin{align}
J'_{N}(\mathcal{K}_{p,\frac{s}{2}};\mathfrak{q})=q^{-2p(N^2-1)}\sum_{k=0}^{N-1}(-1)^kc'_{k,p}\tilde{c}'_{k,\frac{s}{2}}\frac{\{N+k\}!}{\{N-1-k\}!\{N\}}.
\end{align}
\end{theorem}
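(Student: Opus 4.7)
The plan is to mimic the diagrammatic calculation in the proof of Theorem 3.4, but using a non-zero framing presentation of $\mathcal{K}_{p,\frac{s}{2}}$ that omits the $-4p$ compensating twists used there to enforce zero framing. In the proof of Theorem 3.4, the zero-framing diagram is built from $\omega^p$ circling one loop, $s$ half-twists on the other, and an additional block of $-4p$ full twists; it is this last block, absorbed into the $R_k$-basis via the twist map $t^{-4p}$, that produces the coefficients $d^{(-4p)}_{k,j}$ and hence the intricate expression (\ref{formula-Hk}). If we skip the $-4p$ block entirely, the diagrammatic evaluation simplifies dramatically, and we need only correct the end result by a framing factor.

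Concretely, take the diagram with $\omega^p$ circling one loop and $s$ half-twists on the other. Insert color $N-1$, expand $e_{N-1}$ in the $R_k$-basis via formula (\ref{formula-ei}), replace the $\omega^p$ circling by its coefficient $c_{k,p}$ (using $\omega^p=\sum_k c_{k,p}R_k$ together with the orthogonality in Lemma \ref{lemma-Rkj}), and evaluate the remaining pairing of $R_k$ with itself through $s$ half-twists as $\frac{\{2k+1\}!(\{k\}!)^2}{\{1\}}\tilde{c}_{k,\frac{s}{2}}$, again by Lemma \ref{lemma-Rkj}. Using the definitions $c'_{k,p}=\{k\}!\,c_{k,p}$ and $\tilde{c}'_{k,\frac{s}{2}}=\{k\}!\,\tilde{c}_{k,\frac{s}{2}}$ and normalizing as in the proof of Theorem 3.4, we obtain
\begin{align*}
\sum_{k=0}^{N-1}(-1)^k c'_{k,p}\tilde{c}'_{k,\frac{s}{2}}\frac{\{N+k\}!}{\{N-1-k\}!\{N\}}.
\end{align*}

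The final step is to apply the framing correction. Our presentation differs from the zero-framing presentation by the $-4p$ full twists acting on the cable carrying the $(N-1)$-colored strand, which shift the self-linking of the knot by $+4p$ relative to zero framing; multiplying the above sum by $\mu_{N-1}^{-4p}=\mathfrak{q}^{-4p(N^2-1)}=q^{-2p(N^2-1)}$ yields the correctly normalized $J'_N$ and produces the stated prefactor. The main obstacle is the framing bookkeeping: one must verify carefully that omitting the $-4p$ block from the diagram really does change the self-linking of the $(N-1)$-colored strand by exactly $+4p$, and hence that the corresponding correction factor equals $q^{-2p(N^2-1)}$. Apart from this framing accounting, the argument is merely a simpler variant of the diagrammatic computation already carried out in the proof of Theorem 3.4.
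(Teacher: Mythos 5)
Your proposal follows essentially the same route as the paper's own proof: the paper also evaluates the diagram carrying $\omega^p$ and the $s$ half-twists \emph{without} the $-4p$ compensating twists (via the $R_k$-expansion of $e_{N-1}$, the coefficient $c_{k,p}$, and Lemma \ref{lemma-Rkj}), obtains $\sum_{k=0}^{N-1}(-1)^kc'_{k,p}\tilde{c}'_{k,\frac{s}{2}}\frac{\{N+k\}!}{\{N-1-k\}!\{N\}}$, and then restores the omitted twists through the framing factor $\mu_{N-1}^{-4p}$. The only detail to correct is your identity $\mu_{N-1}^{-4p}=\mathfrak{q}^{-4p(N^2-1)}$: since $\mu_{N-1}=(-1)^{N-1}A^{N^2-1}$ and $\mathfrak{q}=A^2$, one has $\mu_{N-1}^{-4p}=\mathfrak{q}^{-2p(N^2-1)}$, which is the prefactor the paper's own derivation actually produces (the discrepancy between the theorem's ``$q^{-2p(N^2-1)}$'' and the proof's ``$\mathfrak{q}^{-2p(N^2-1)}$'' is an inconsistency already present in the paper's text).
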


\begin{proof}
 Since
\begin{align*}
\raisebox{-45pt}{
\includegraphics[width=120 pt]{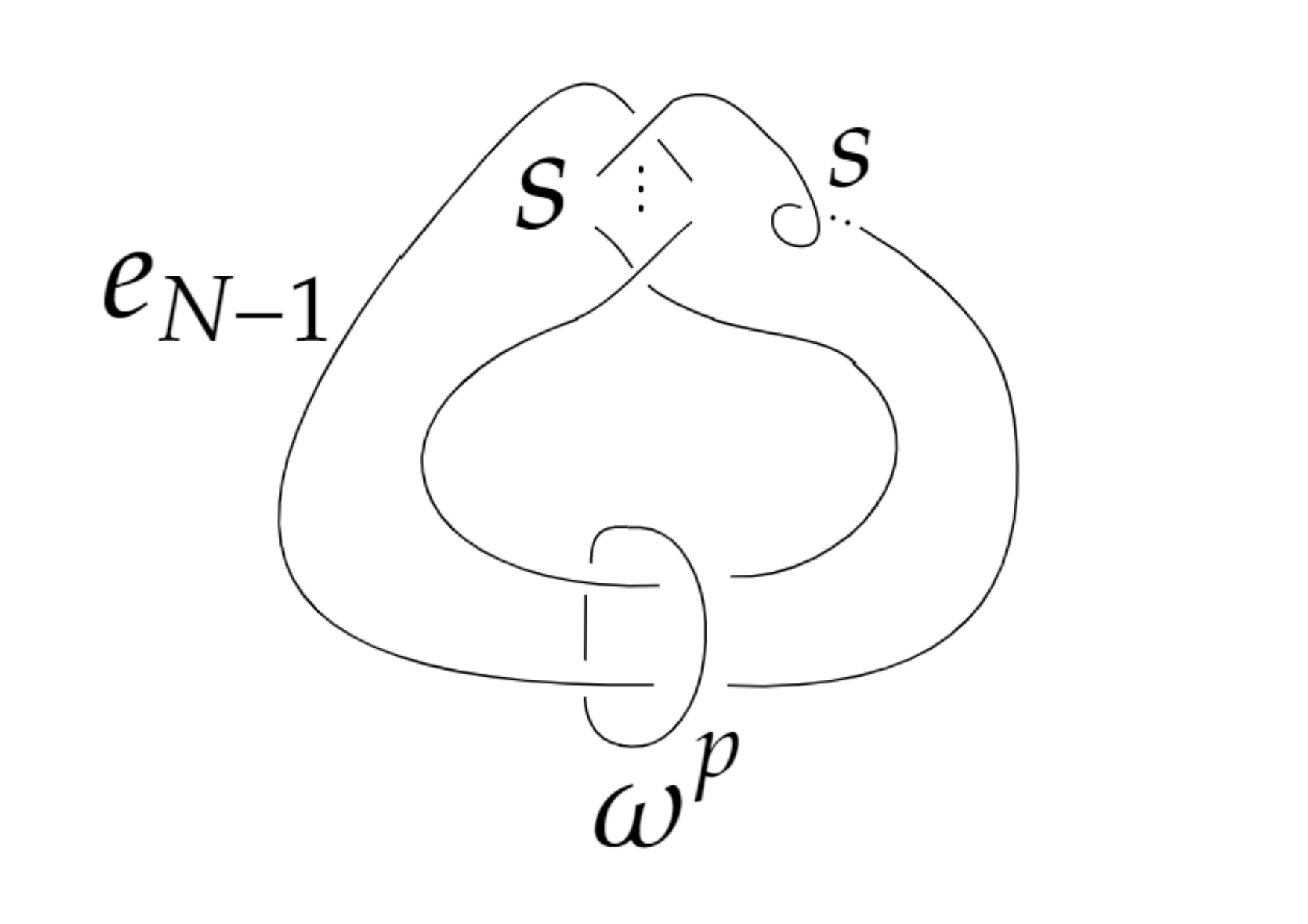}}&=\sum_{k=0}^{N-1}
(-1)^{N-1-k}\left[
\begin{array}{@{\,}c@{\,}}N+k \\ N-1-k \end{array} \right]  \raisebox{-45pt}{\includegraphics[width=120
pt]{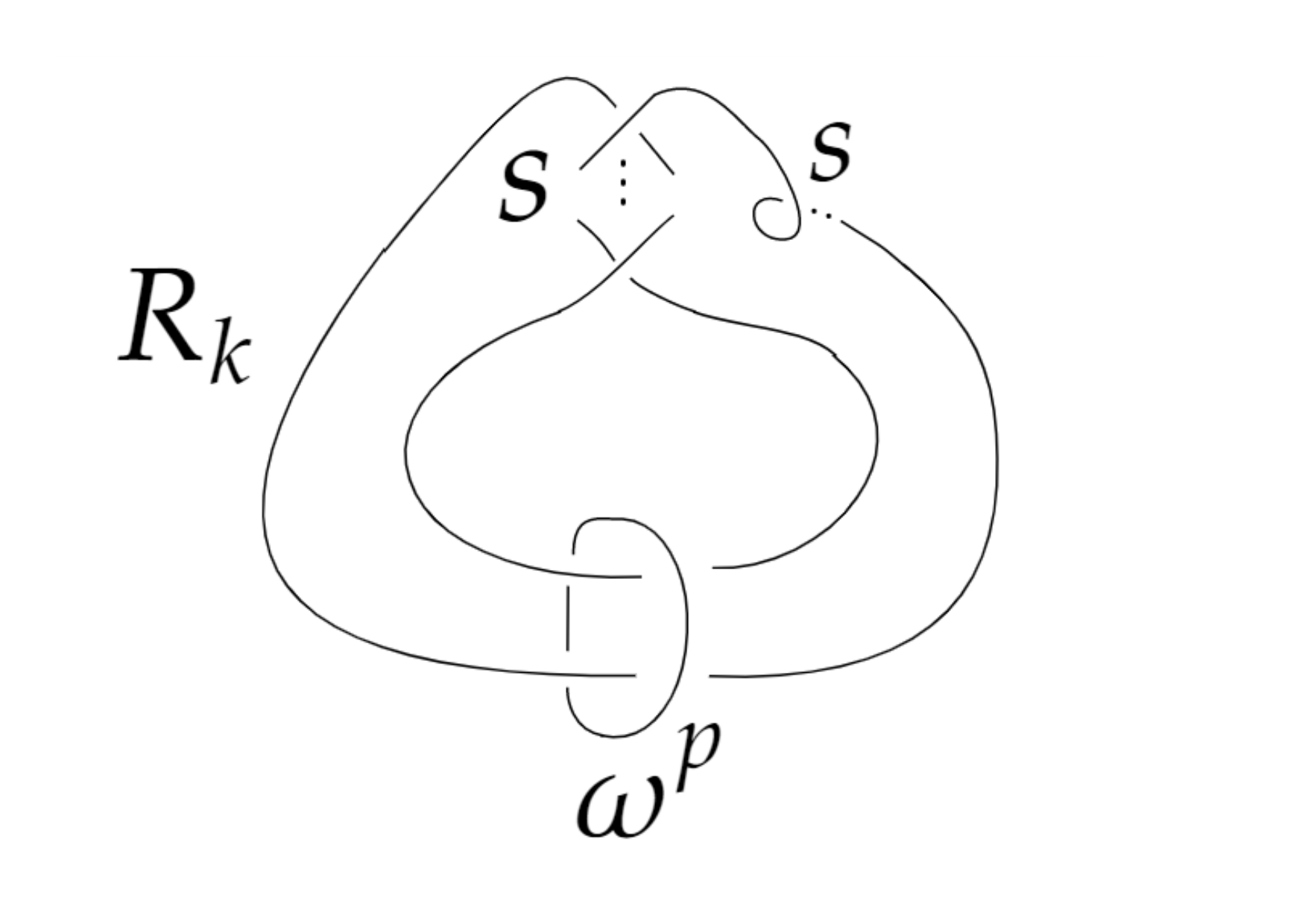}}\\ &=\sum_{k=0}^{N-1}
(-1)^{N-1-k}\left[
\begin{array}{@{\,}c@{\,}}N+k \\ N-1-k \end{array} \right]c_{k,p} \raisebox{-45pt}{\includegraphics[width=120
pt]{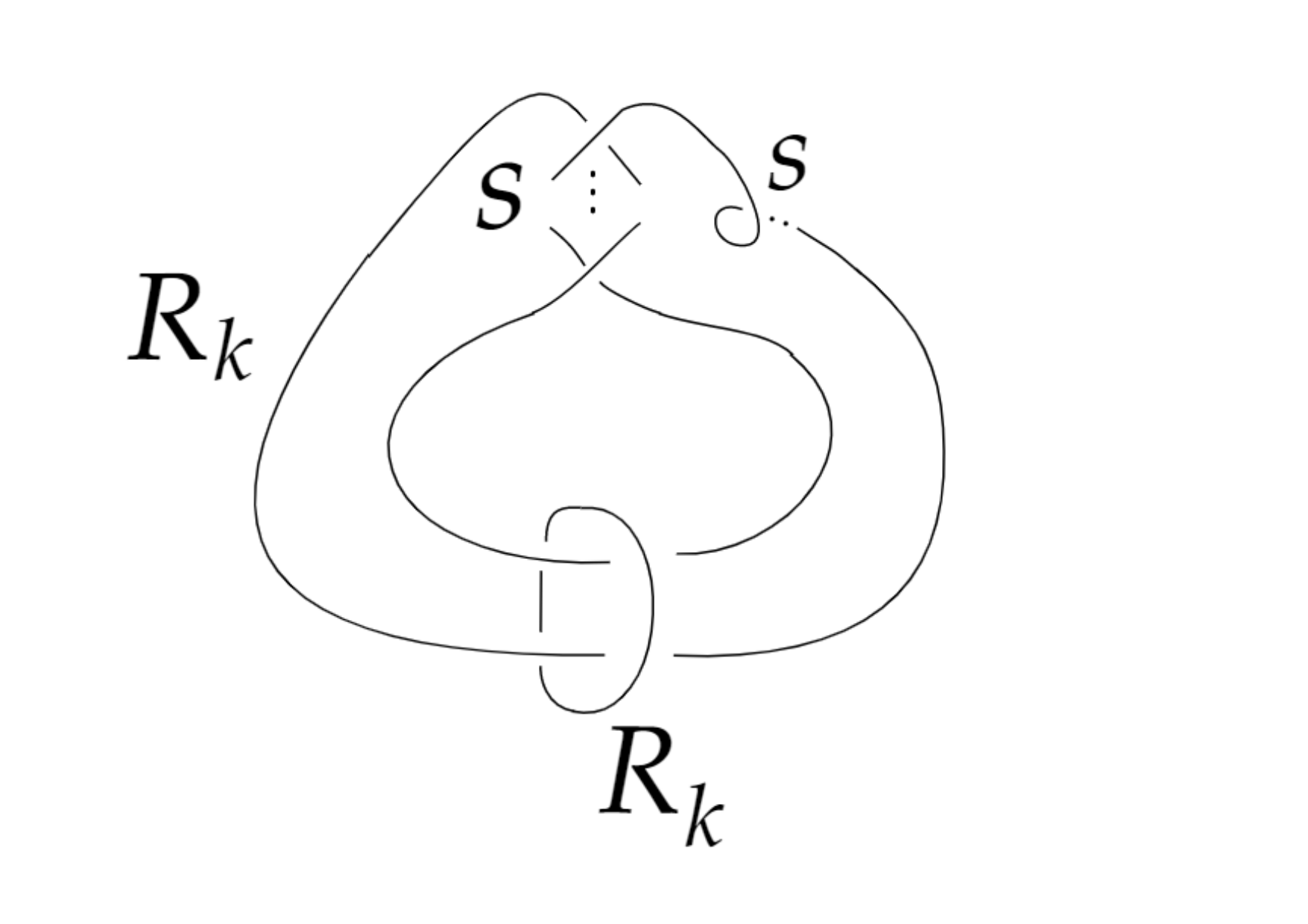}}\\ &=\sum_{k=0}^{N-1}
(-1)^{N-1-k}\left[
\begin{array}{@{\,}c@{\,}}N+k \\ N-1-k \end{array} \right]c_{k,p}\tilde{c}_{k,\frac{s}{2}}\frac{\{2k+1\}!(\{k\}!)^2}{\{1\}}\\ &=
\sum_{k=0}^{N-1}
(-1)^{N-1-k}c_{k,p}\tilde{c}_{k,\frac{s}{2}}(\{k\}!)^2\frac{\{N+k\}!}{\{N-1-k\}!\{1\}}. 
\end{align*}

Then
\begin{align*}
J_{N}(\mathcal{K}_{p,\frac{s}{2}};\mathfrak{q})&=
    \raisebox{-45pt}{
\includegraphics[width=120 pt]{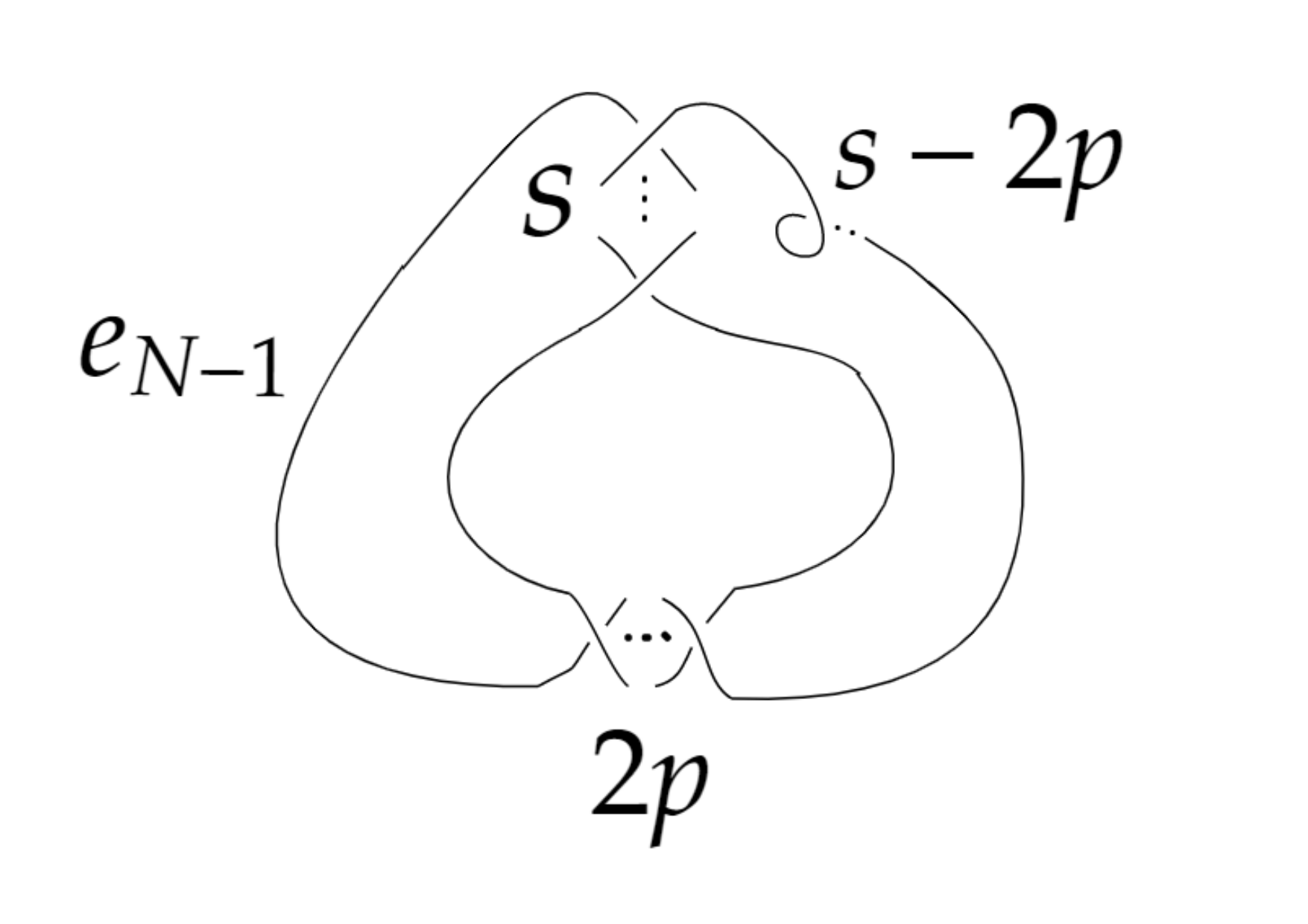}}\\
&=\raisebox{-45pt}{
\includegraphics[width=120 pt]{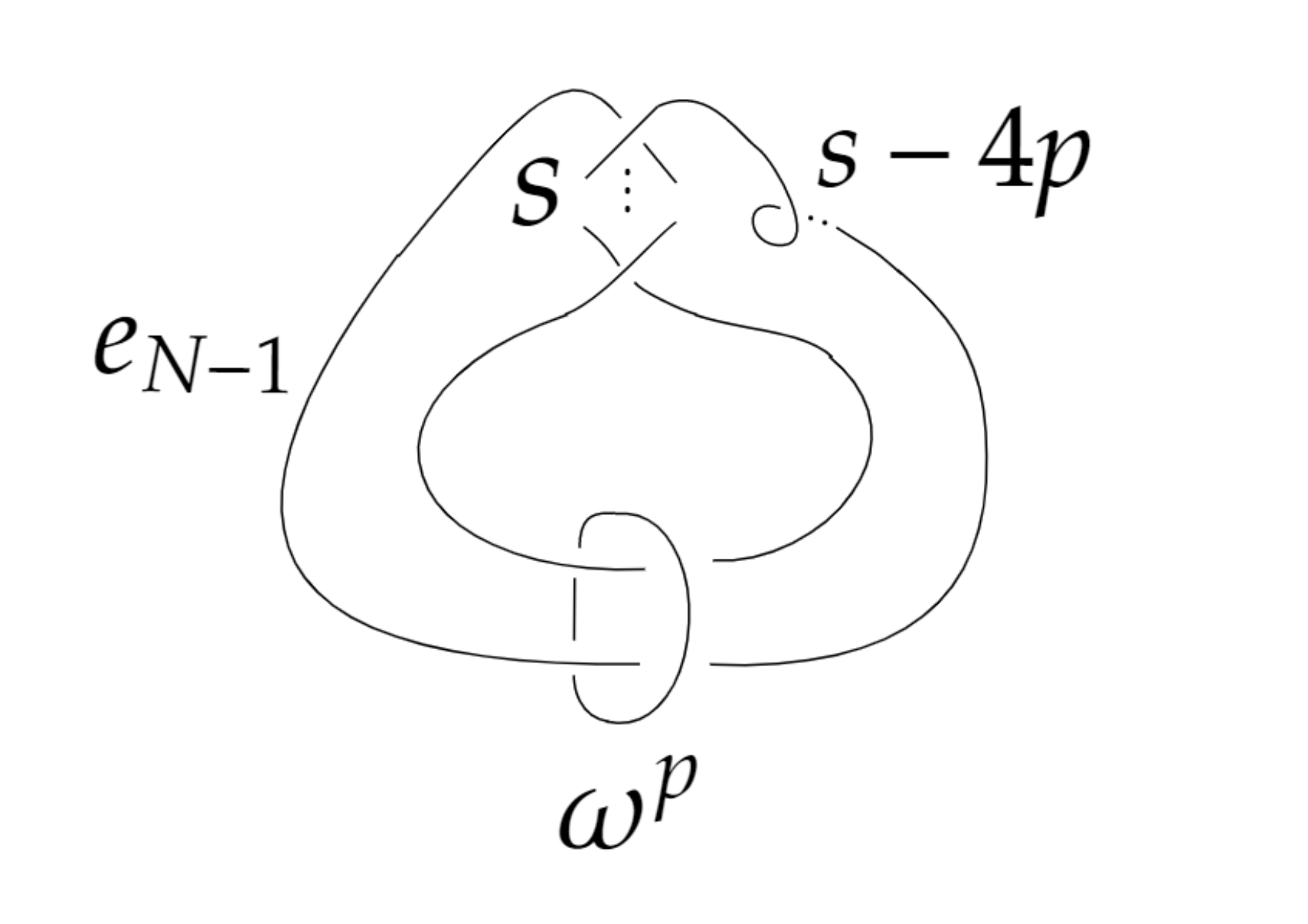}}\\
&=\mathfrak{q}^{-2p(N^2-1)}\raisebox{-45pt}{
\includegraphics[width=120 pt]{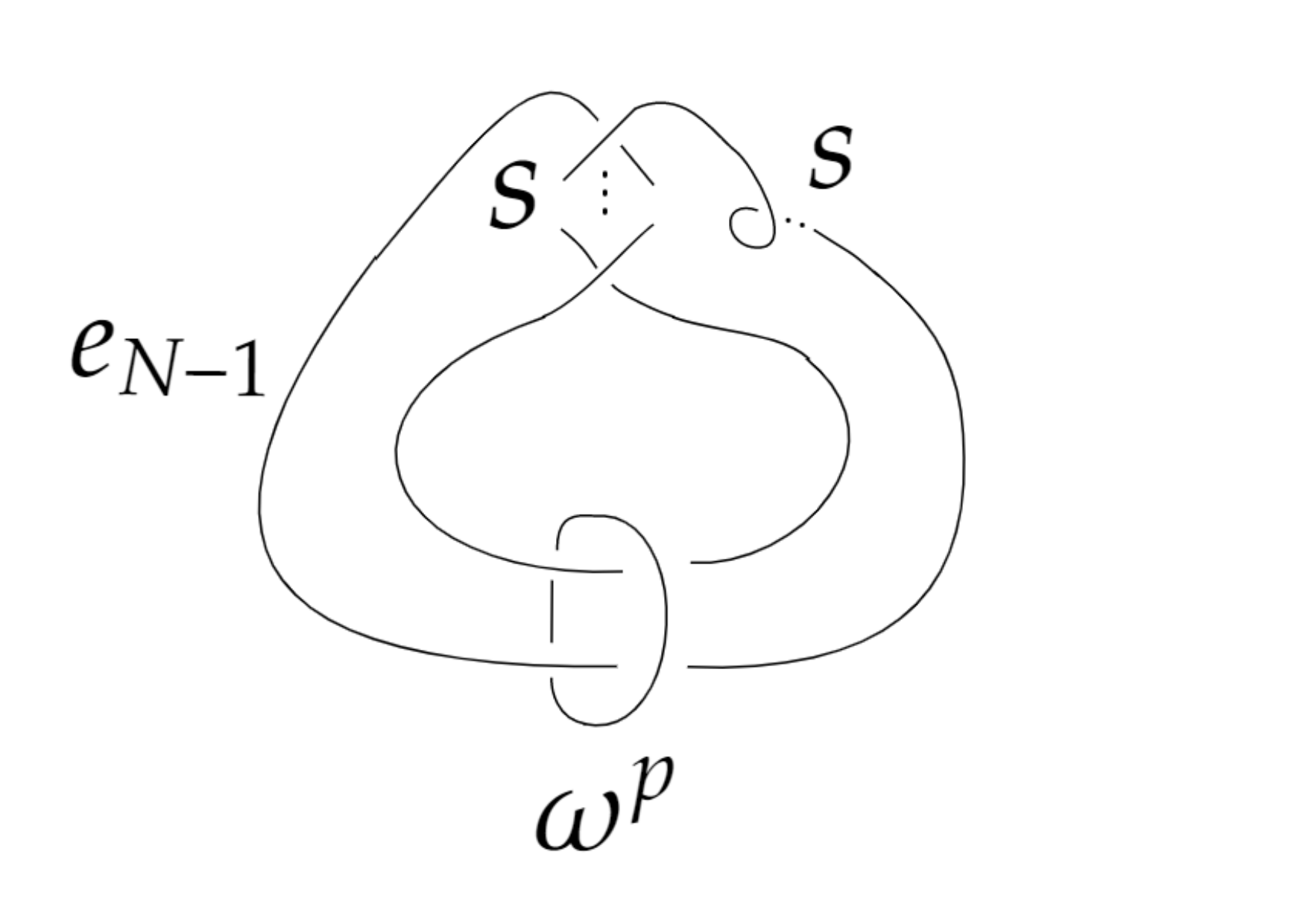}}\\
&=\mathfrak{q}^{-2p(N^2-1)}\sum_{k=0}^{N-1}
(-1)^{N-1-k}c_{k,p}\tilde{c}_{k,\frac{s}{2}}(\{k\}!)^2\frac{\{N+k\}!}{\{N-1-k\}!\{1\}}. 
\end{align*}

Hence
\begin{align*}
   J'_{N}(\mathcal{K}_{p,\frac{s}{2}};\mathfrak{q})&=\frac{(-1)^{N-1}\{1\}}{\{N\}}J_{N}(\mathcal{K}_{p,\frac{s}{2}};\mathfrak{q})\\
   &=\mathfrak{q}^{-2p(N^2-1)}\sum_{k=0}^{N-1}
(-1)^{k}c'_{k,p}\tilde{c}'_{k,\frac{s}{2}}\frac{\{N+k\}!}{\{N-1-k\}!\{N\}}.
\end{align*}
\end{proof}

\end{document}